\documentclass[10pt,reqno,twoside]{amsart}

\usepackage{amsmath,amsfonts,euscript,amscd,amsthm,amssymb,upref,graphics,color,verbatim, float, placeins,mathrsfs, mathtools,comment,tikz, multirow, makecell, tikz-cd}

\usetikzlibrary{positioning}

\newcommand{\Cscr}{\mathscr{C}}

\usepackage{ulem}  

\usepackage[margin=1in]{geometry}
\usepackage[all]{xy}
\usepackage{hyperref}
\usepackage[shortlabels]{enumitem}

\newcommand{\rk}{\operatorname{{\rm rk}}}

\newcommand{\Integ}{\ensuremath{\mathbb{Z}}}
\newcommand{\Nat}{\ensuremath{\mathbb{N}}}
\newcommand{\Rat}{\ensuremath{\mathbb{Q}}}
\newcommand{\Comp}{\ensuremath{\mathbb{C}}}

\newcommand{\aff}{\ensuremath{\mathbb{A}}}
\newcommand{\bk}{{\ensuremath{\rm \bf k}}}

\newcommand{\G}{\ensuremath{\mathbb{G}}}

\newcommand{\lb}{\langle}
\newcommand{\rb}{\rangle}

\newcommand{\lcm}{		\operatorname{{\rm lcm}}}

\newcommand{\setspec}[2]{\big\{\,#1\, \mid \,#2\, \big\}}
\renewcommand{\emptyset}{\varnothing}
\newcommand{\isom}{\cong}
\newcommand{\Sing}{		\operatorname{{\rm Sing}}}
\newcommand{\PPP}{\mathbb{P}}

\newcommand{\rank}{		\operatorname{\rm rank}}

\newcommand{\Spec}{		\operatorname{{\rm Spec}}}
\newcommand{\Proj}{		\operatorname{{\rm Proj}}}

\newcommand{\Supp}{		\operatorname{{\rm Supp}}}

\newcommand{\OSheaf}{\operatorname{\mathcal O}}

\swapnumbers

\newtheorem{theorem}[subsection]{Theorem}

\newtheorem*{theorem*}{Theorem}
\newtheorem{proposition}[subsection]{Proposition}
\newtheorem*{proposition*}{Proposition}
\newtheorem{lemma}[subsection]{Lemma}

\newtheorem*{lemma*}{Lemma}
\newtheorem{corollary}[subsection]{Corollary}

\theoremstyle{definition}

\newtheorem{remark}[subsection]{Remark}
\newtheorem{definition}[subsection]{Definition}

\newtheorem{nothing}[subsection]{}
\newtheorem{nothing*}[subsection]{}
\newtheorem{example}[subsection]{Example}
\newtheorem{question}[subsection]{Question}
\newtheorem*{question*}{Question}
\newtheorem{assumption}[subsection]{Assumption}
\newtheorem{notation}[subsection]{Notation}
\newtheorem*{notation*}{Notation}
\newtheorem{remarks}[subsection]{Remarks}

\newtheorem*{maintheorem}{Main Theorem}

\newcommand{\codim}{		\operatorname{{\rm codim}}}

\newcommand{\mgoth}{\mathfrak{m}}

\newcommand{\Eeul}{\EuScript{E}}

\newcommand{\Geul}{\EuScript{G}}
\newcommand{\Heul}{\EuScript{H}}

\newcommand{\Keul}{\EuScript{K}}

\newcommand{\Ueul}{\EuScript{U}}
\newcommand{\Veul}{\EuScript{V}}

\newcommand{\ba}{{\bf a}}
\newcommand{\bb}{{\bf b}}

\newcommand{\Cl}{		\operatorname{{\rm Cl}}}

\newcommand{\ML}{\operatorname{{\rm ML}}}

\newcommand{\Aff}{\mathbb{A}}

\title{The isomorphism classes of the surfaces $x_1^{a_1} + x_2^{a_2} + x_3^{a_3} + 1 = 0$}

\author[M. Chitayat]{Michael Chitayat}
\address{University of Padova, Department of Mathematics,  Via Trieste 63, 35131, Padova, PD, Italy}
\email{michael.chitayat@unipd.it}

\author[B. Hajra]{Buddhadev Hajra}
\address{Indian Statistical Institute Kolkata, Stat-Math Unit, 203 B. T. Road, Baranagar, Kolkata 700108, India}
\email{hajrabuddhadev92@gmail.com}
\date{\today}
 
\subjclass[2020]{14J10, 14J70, 14R05, 14R25.}
\keywords{Isomorphism classes of varieties; compactification of affine surfaces; quasismooth hypersurfaces; weighted projective varieties.}

\begin{document}

\begin{abstract}
Let $f = x_1^{a_1} + x_2^{a_2} + x_3^{a_3} + 1 \in \Comp[x_1,x_2,x_3]$ and let $g =  y_1^{b_1} + y_2^{b_2} + y_3^{b_3} + 1 \in \Comp[y_1,y_2,y_3]$ where $a_1,a_2,a_3,b_1,b_2,b_3 \geq 2$. We prove that the surfaces $V(f) \subset \Aff^3$ and $V(g) \subset \Aff^3$ are isomorphic if and only if $(a_1,a_2,a_3) = (b_1,b_2,b_3)$ up to a permutation of the entries.
\end{abstract}

\maketitle
\newcommand{\I}{\text{\rm I}}
\newcommand{\II}{\text{\rm II}}
\newcommand{\plinth}{\operatorname{{\rm plinth}}}

\newcommand{\bbD}{\mathbb{D}}
\newcommand{\dgoth}{\mathfrak{d}}
\newcommand{\Hom}{\operatorname{{\rm Hom}}}
\newcommand{\Span}{\operatorname{{\rm Span}}}
\newcommand{\kk}[1]{\bk^{[{#1}]}}
\newcommand{\Norm}{\operatorname{{N}}}
\newcommand{\ad}{	\operatorname{\text{\rm ad}}}

\newcommand{\UnitalAlg}[1]{\langle #1 \rangle_{\text{\rm u-alg}}}
\newcommand{\AssAlg}[1]{\langle #1 \rangle_{\text{\rm alg}}}
\newcommand{\LieAlg}[1]{\langle #1 \rangle_{\text{\rm Lie}}}

\newcommand{\lfd}{\operatorname{{\rm LFD}}}
\newcommand{\LieWLF}{\operatorname{{\rm LieWLF}}}
\newcommand{\SolLieWLF}{\operatorname{{\rm SolLieWLF}}}

\newcommand{\rtrdeg}{	\operatorname{{\rm rtrdeg}}}
\newcommand{\grank}{	\operatorname{\rm grank}}

\newcommand{\rien}[1]{}




\vspace{-10pt}

\section*{Notation and Assumptions}
\begin{notation*}
    Throughout the text, we use the following assumptions and notation:
    \begin{itemize}
        \item The set of natural numbers $\Nat$ include zero. We write $\Nat_{\geq k}$ for the set of natural numbers greater than or equal to $k$.
        
        \item A $\bk$-\textit{variety}  is an integral separated scheme of finite type over an algebraically closed field $\bk$.  A {\it curve} (resp. {\it surface}) is a $1$-dimensional (resp. $2$-dimensional) $\bk$-variety.
        \item Given a $\bk$-variety $X$, we denote its singular locus by $\Sing(X)$. 
            \item Let $X$ be a normal variety. By \textit{divisor}, we mean a  Weil divisor of $X$. A Weil divisor is \textit{$\Rat$-Cartier} if $nD$ is Cartier for some $n \in \Nat_{\geq 1}$. It is \textit{ample} if there exists some $m\in \Nat_{\geq1}$ such that $\OSheaf_X(mD)$ is a very ample invertible sheaf on $X$.  
        \item If $X$ is a normal variety, then $K_X$ is a canonical divisor of $X$. 
        
        \item We use the symbol $\zeta_n$ to denote the primitive $n^{th}$ root of unity $e^{\frac{2\pi i}{n}}$. 
        \item Given $k \in \Nat$, we use bold-faced letters $\ba = (a_1, \dots, a_k)$ or $\bb = (b_1, \dots, b_k)$ to denote a tuple in $\Nat^k$. We write $\ba \sim \bb$ if the coordinates of $\bb$ permute those of $\ba$.

    \end{itemize}
\end{notation*}

     \section{Introduction} The classification of algebraic varieties up to isomorphism is a central and often hard problem in algebraic geometry, even for families of varieties that are very special in nature. A classical illustration is provided by the family of Danielewski surfaces $\{D_n\}_{n \in \mathbb{N}}$, where
\[
D_n = V(x_1^n x_2 + x_3^2 + 1) \subset \mathbb{A}_\Comp^3.
\]
A natural question is:

\begin{question*}
Let $m,n \in \mathbb{N}$. Does $D_m \isom D_n$ imply $m = n$?
\end{question*}
The answer to this question is not immediately obvious, and yet this question has an affirmative answer. The surfaces $D_n$ are pairwise non-isomorphic and can be distinguished by their homology groups at infinity \cite{Fieseler1994}. Such examples highlight that even in seemingly simple families of affine hypersurfaces that are parametrized by $\Nat$, it may well be difficult to distinguish which members are part of the same isomorphism class. A classification of the isomorphism classes of Danielewski surfaces was later given in \cite{poloni2011classification}. While the above question was very natural in view of the importance of these Danielewski surfaces as counterexamples to the Generalized Zariski Cancellation Problem, a similar question can be asked for various interesting families of affine varieties.

In this article, we investigate analogous questions for a family of affine hypersurfaces parameterized by $(\Nat_{\geq 2})^n$. For $\ba = (a_1,\dots,a_n) \in (\mathbb{N}_{\geq 2})^n$, consider the polynomial
\[
f_{\ba} = x_1^{a_1} + \cdots + x_n^{a_n} \in \Comp[x_1,\dots,x_n],
\]
and the surjective morphism of $\Comp$-varieties 
\[
\varphi_{\ba} : \mathbb{A}^n_\Comp \longrightarrow \mathbb{A}^1_\Comp
\]
induced by the $\Comp$-algebra monomorphism $\Comp[f_\ba] \hookrightarrow \Comp[x_1, \dots, x_n].$ 

We view $\varphi_{\ba}$ as an affine fibration. A basic but crucial observation is that the restriction of $\varphi_{\ba}$ over $\mathbb{A}^1_\Comp \setminus \{0\}$ is isotrivial: for any $t \in \mathbb{A}^1_\Comp \setminus \{0\}$, the fiber $\varphi_{\ba}^{-1}(t)$ is isomorphic to $\varphi_{\ba}^{-1}(-1)$. This follows from the weighted homogeneity of $f_{\ba}$, which allows one to rescale coordinates and identify all nonzero fibers via $\Comp$-isomorphisms. Thus the affine fibration $\varphi_{\ba}$ is governed by two distinguished fibres:
\begin{itemize}
\item the special fiber $V(f_{\ba}) = \varphi_{\ba}^{-1}(0)$, which is a singular hypersurface with an isolated singularity at the origin---famously known as a {\it Pham-Brieskorn hypersurface};
\item the general fiber $V(f_{\ba}+1) = \varphi_{\ba}^{-1}(-1)$, which is a smooth hypersurface.
\end{itemize}
This leads naturally to the problem of understanding to what extent the isomorphism type of these fibers determines $\ba$. 

Given $\ba, \bb \in (\mathbb{N}_{\geq 2})^n$, we write $\ba \sim \bb$ if $\bb$ is obtained from $\ba$ by a permutation of its entries. Consider the following two questions (the $n = 3$ case of Question (2) was asked by R. V. Gurjar):

\begin{question*}
Let $\ba, \bb \in (\mathbb{N}_{\geq 1})^n$, and consider the associated affine fibrations $\varphi_{\ba}, \varphi_{\bb} : \mathbb{A}^n_\Comp \to \mathbb{A}^1_\Comp$ defined above.
\begin{enumerate}[\rm(1)]
\item Suppose $V(f_{\ba}) \isom V(f_{\bb})$ (the special fibers are isomorphic). Is $\ba \sim \bb$?
\item Suppose $V(f_{\ba} + 1) \isom V(f_{\bb} + 1)$ (the general fibers are isomorphic). Is $\ba \sim \bb$?
\end{enumerate}
\end{question*}

Question (1) is natural in view of the fact that Pham-Brieskorn hypersurfaces are widely studied from various perspectives. (A simple internet search will yield hundreds of academic articles on the varieties and manifolds defined by these equations.) Question (1) asks whether the isomorphism class of a Pham-Brieskorn hypersurface determines its exponents, while Question (2) asks whether the same is true for smooth affine hypersurface $V(f_{\ba}+1)$.  Although all fibers of $\varphi_{\ba}$ over $\mathbb{A}^1_\Comp\setminus \{0\}$ for a fixed $\ba$ are isomorphic, it is not immediately clear whether their common isomorphism type determines $\ba$.

Note that the seemingly subtle distinction between Questions (1) and (2) plays an essential role in our analysis. While Question (1) has an affirmative answer for all $n$ (see Proposition \ref{PBexponents}), Question (2) presents more of a difficulty. The main issue is that the hypersurfaces $V(f_{\ba}+1)$ and $V(f_{\bb}+1)$ are smooth, and hence lack the singular structure that provides the accessible invariants used to resolve Question (1).

Question (2) is trivial in the $n=1$ case, and was settled for $n=2$ (i.e., for curves) in \cite{gurjar2025classification}. The main result of this article is an affirmative answer to Question (2) in the $n=3$ case.

\begin{maintheorem}\label{mainTheorem}
Let $\ba, \bb \in (\mathbb{N}_{\geq 2})^3$ and let $f_{\ba}, f_{\bb} \in \mathbb{C}[x_1,x_2,x_3]$. If $V(f_{\ba} + 1) \isom V(f_{\bb} + 1)$ then $\ba \sim \bb$.
\end{maintheorem}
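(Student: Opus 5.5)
The plan is to compactify $X_{\ba}=V(f_{\ba}+1)\subset\Aff^3$ inside a weighted projective space, and then to read off $\ba$ from invariants of the boundary that depend only on the isomorphism class of $X_{\ba}$. Set $L=\lcm(a_1,a_2,a_3)$ and $w_i=L/a_i$. Then $X_{\ba}$ is the complement of the hyperplane section $\{z=0\}$ in the quasismooth hypersurface
\[
\bar X_{\ba}=V(x_1^{a_1}+x_2^{a_2}+x_3^{a_3}+z^L)\subset \PPP(w_1,w_2,w_3,1).
\]
The boundary $C_{\ba}=\bar X_{\ba}\cap\{z=0\}$ is the weighted projective curve $V(x_1^{a_1}+x_2^{a_2}+x_3^{a_3})\subset\PPP(w_1,w_2,w_3)$. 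Its normalization is a smooth curve $\Gamma_{\ba}$ (the base of the $\Comp^*$-fibration of the Pham--Brieskorn surface $V(f_{\ba})\setminus\{0\}$), and its genus is given by the standard orbifold formula in terms of $\ba$. The singular points of $\bar X_{\ba}$ lie on $C_{\ba}$. They are cyclic quotient singularities whose types are governed by the pairwise gcd's $\gcd(a_i,a_j)$ and by the weights.

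The first step is to show that the boundary structure at infinity is an isomorphism invariant of $X_{\ba}$. Resolve the singularities of $\bar X_{\ba}$ minimally along the boundary to obtain a smooth SNC completion $(Y_{\ba},D_{\ba})$. The dual graph of $D_{\ba}$ is a star-shaped tree: the central vertex is the strict transform of $C_{\ba}$, with genus $g(\Gamma_{\ba})$, and the arms are chains coming from the cyclic quotient singularities. By the uniqueness theory of minimal SNC completions of affine surfaces (Ramanujam, Danilov--Gizatullin, Flenner--Kaliman--Zaidenberg), the weighted dual graph is determined by $X_{\ba}$ up to elementary transformations. Elementary transformations only act on vertices of genus zero with valence at most two. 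So the central curve, its genus, and the continued-fraction data $(e_i,\alpha_i/\beta_i)$ of the branches are invariants, at least once the genus is positive or the center has valence at least $3$. Equivalently, the homology at infinity, i.e. $H_1$ of the link at infinity (a Seifert fibered $3$-manifold), is an invariant, and so is its Seifert invariant. I would instead use the Seifert invariants of the boundary $3$-manifold $M_{\ba}$ directly. By Neumann's theorem these determine the star graph in the non-exceptional cases, and $M_{\ba}$ is the link of $V(f_{\ba})$ at $0$ with orientation reversed. This reduces the problem to a statement about Brieskorn links.

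The second step is to recover $\ba$ from these invariants. The Seifert invariants of $\Sigma(a_1,a_2,a_3)$ are known explicitly (Neumann--Raymond). The genus is $g$, and there are exceptional fibers of multiplicities $\alpha_i=\lcm(a_1,a_2,a_3)/\lcm(a_j,a_k)$, with $\gcd(a_j,a_k)\cdot\ldots$ copies. The Euler number is $e=-a_1a_2a_3/\lcm(a_1,a_2,a_3)^2$. I would then invoke the result, presumably Proposition \ref{PBexponents}, whose proof likely recovers $\ba$ from exactly this data: the Brieskorn link determines $\ba$ up to permutation, by Neumann--Raymond. Together with the first step, this gives $\ba\sim\bb$.

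The main obstacle is the degenerate cases. These are cases where the boundary graph is not rigid, namely when $\Gamma_{\ba}\cong\PPP^1$ and there are at most two exceptional fibers, or where the link is a lens space or a spherical manifold. In these cases the plumbing or Seifert data need not determine $\ba$ uniquely. Examples are $\ba=(2,2,n)$, where $X_{\ba}$ is a smooth affine quadric-type surface with a $\Comp^*$-action, and the platonic triples like $(2,3,5)$. For these finitely many families I would fall back on direct invariants. These include $H_1$ at infinity, together with the orientation, or equivalently the linking form; the Makar-Limanov invariant $\ML$, which distinguishes whether $X_{\ba}$ admits $\G_a$-actions (only for $(2,2,n)$); and the Euler characteristic $\chi(X_{\ba})=1+(a_1-1)(a_2-1)(a_3-1)$, the Milnor number plus one. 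With these I would separate the remaining members case by case.
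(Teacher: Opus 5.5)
\textbf{The gap is in your second step.} It is not true that the Brieskorn link $\Sigma(a_1,a_2,a_3)$ determines $\ba$ up to permutation. Equivalently, the SNC boundary graph together with the genus of its central curve does not determine $\ba$. The failures happen in the generic positive-genus range, not among your degenerate cases.

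Take $\ba=(3,4,12)$ and $\bb=(2,7,14)$.
\begin{itemize}
\item $\overline{X_\ba}\subset\PPP(4,3,1,1)$ and $\overline{X_\bb}\subset\PPP(7,2,1,1)$ are both smooth, since the two singular points of each ambient space do not lie on the surface.
\item In each case the boundary is a single smooth curve of genus $3$. This follows from $2g-2=a_1a_2a_3/L-\sum_{i<j}\gcd(a_i,a_j)$, which gives $12-8=14-10=4$.
\item In each case this curve has self-intersection $L/(w_1w_2w_3)=1$.
\end{itemize}
So the two SNC boundaries are identical. Both links at infinity are the same circle bundle over a genus-$3$ surface, i.e.\ $\Sigma(3,4,12)\cong\Sigma(2,7,14)$, with Euler number $-a_1a_2a_3/L^2=-1$ in both cases. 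The same coincidence occurs for $(2,2r-1,4r-2)$ and $(3,r,3r)$ for every $r\geq 4$ with $3\nmid r$, and for $(4,6,12)$ and $(2,16,16)$ (genus $7$, self-intersection $2$).

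Proposition~\ref{PBexponents} cannot fill this step. It concerns isomorphisms of the singular surfaces $V(f_\ba)$ and works through their Milnor/Tjurina algebras via Mather--Yau. That analytic information cannot be recovered from $X_\ba$ or from its link at infinity. Your first step is fine as stated. In the genus-zero case with at least three branches it is essentially what the paper does on $S_0$, with $(2,2,n)$ treated separately.

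\textbf{What is missing.} In the positive-genus case you need an ingredient that sees more than the topology at infinity. In the example above, your fallback invariant $\chi(X_\ba)=1+\prod(a_i-1)$ happens to separate the two surfaces ($67$ versus $79$). However, you give no argument that your invariants separate all triples, and you only planned to use them for finitely many degenerate families.

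The paper's device is algebraic:
\begin{itemize}
\item A boundary curve of positive genus cannot be contracted into rational singularities. Hence an isomorphism $X_\bb\to X_\ba$ extends to a birational map $\overline{X_\bb}\dashrightarrow\overline{X_\ba}$ that is an isomorphism away from finitely many points (Proposition~\ref{mapExtends}).
\item When the amplitude $L-w_1-w_2-w_3-1$ is positive, the canonical divisor is ample, and Proposition~\ref{IsoGeneralization} upgrades this to $\overline{X_\ba}\isom\overline{X_\bb}$.
\item Esser's theorem then recovers the weights $L/a_i$. This is exactly what distinguishes $(4,3,1,1)$ from $(7,2,1,1)$.
\item The remaining positive-genus cases (amplitude $\leq 0$) form a finite list, handled by the rank of $\Cl(X_\ba)$ and the boundary genus.
\end{itemize}
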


We emphasize that, for fixed tuples $\ba,\bb \in (\mathbb{N}_{\geq 2})^k$, it is often possible to find an invariant that distinguishes the hypersurfaces $V(f_{\ba}+1)$ and $V(f_{\bb}+1)$. The difficulty lies in proving that such a distinguishing invariant must exist {\it a priori}, without any assumptions on the specific chosen tuples $\ba$ and $\bb$. The main result of this article shows that this is indeed the case for the 2-dimensional hypersurfaces of form $V(f_\ba + 1)$.

Let us give the main ideas behind our proof of the Main Theorem. We let $\ba, \bb \in (\Nat_{\geq 2})^3$ and define $X_\ba = V(f_\ba +1) = V(x_1^{a_1} + x_2^{a_2} + x_3^{a_3} + 1) \subset \Aff_\Comp^3$ and $X_\bb = V(f_\bb + 1) =  V(x_1^{b_1} + x_2^{b_2} + x_3^{b_3} + 1) \subset \Aff_\Comp^3$. Our goal is to show that if $X_\ba \isom X_\bb$ then $\ba\sim \bb$.

    The first step is to partition $(\Nat_{\geq 2})^3$ into four subsets $S_0, S_{+,+}, S_{+,0}, S_{+,-}$ with the property that if $X_\ba \isom X_\bb$, then both $\ba$ and $\bb$ belong to the same subset. We call subsets with this property \textit{parameter-closed subsets} (see Section \ref{paramClosed}). The fact that these four subsets are parameter-closed is established in Lemma \ref{theEasyPart}. The proof of that lemma mainly depends on two things:
    \begin{enumerate}
        \item that $X_\ba$ can always be embedded as an affine open subset of a well-formed quasismooth weighted projective surface,  which we denote by $\overline{X_\ba}$ (see Section \ref{section:Setup});
        \item the birational geometry of projective surfaces with rational singularities (see Propositions \ref{IsoGeneralization}, \ref{mapExtends}).
    \end{enumerate} 
    The second step is to show that if $\ba$ and $\bb$ are in any of the four subsets above, then $X_\bb \isom X_\ba$ implies $\bb \sim \ba$. When $\ba, \bb \in S_{+,+}$ we prove that any isomorphism between $X_\ba$ and $X_\bb$ extends to an isomorphism between $\overline{X_\ba}$ and $\overline{X_\bb}$. This allows us to use a recent result of Esser \cite[Theorem 2.1]{esser2024automorphisms} to conclude that $\ba \sim \bb$. It turns out that $S_{+,0}$ is a  finite set. If $\ba, \bb \in S_{+,0}$ and $X_\ba \isom X_\bb$, the fact that $\ba \sim \bb$ can be deduced by computing two invariants (the rank of the divisor class group and the genus of the boundary curve) for each surface $X_\ba$ such that $\ba \in S_{+,0}$. If $\ba, \bb \in S_{+,-}$ and $X_\ba \isom X_\bb$, the equivalence $\ba \sim \bb$ is deduced by combining a recent result of Daigle on locally nilpotent derivations \cite[Proposition 10.5]{daigle2023rigidity} together with the same two invariants used in the $S_{+,0}$ case. Finally, if $\ba, \bb \in S_0$, the proof combines:
    \begin{itemize}
        \item a characterization of when the surfaces $V(f_\ba) \subseteq \Aff^3$ are rational (see Proposition \ref{S0char}),
        \item an analysis of the singular points of the weighted projective surface $\overline{X_\ba}$ (see Section \ref{Section:Singularities}),
        \item an analysis of weighted graphs that appear as dual graphs of SNC boundaries (see \ref{subsec:weightedGraphs} to \ref{minimalGraph3branches}).
    \end{itemize}
    
    \medskip 
    The article is organized as follows. We give the necessary preliminaries in Section \ref{Sec:Preliminaries}. Section \ref{section:Setup} establishes the notation, Section \ref{Section:Singularities} classifies the singularities of certain weighted projective hypersurfaces, Section \ref{paramClosed} defines and gives basic properties of parameter-closed subsets and Section \ref{proofOfTheorem} proves the Main Theorem using the previously established results.

    \medskip
    \noindent \textbf{Acknowledgments.} Both authors would like to thank: Neena Gupta and Adrien Dubouloz for the invitation to speak at the Indo-European Conference on Mathematics (IECM-2026), jointly organized by the European Mathematical Society and the Indian Mathematics Consortium, held in Pune, India, where this collaboration started; Daniel Daigle for useful discussions around weighted graphs and for sharing the reference \cite{daigle2001weighted}; R.V. Gurjar for asking the central question of this paper.
    
    The second-named author expresses his sincere gratitude to R. V. Gurjar for his guidance throughout this project. He is also grateful to Neena Gupta for useful discussions about Makar-Limanov invariants.
    
\medskip
    \noindent \textbf{Funding.} The first-named author's travel to Pune was partially funded by the ``Algebraic groups and their actions" project at the University of Padova, with project code 2025DM1DOR25-00862. The second-named author acknowledges financial support from the Department of Science and Technology, Government of India, through the INSPIRE Faculty Fellowship (Reference No.: DST/INSPIRE/04/2024/003379).

\medskip

\noindent \textbf{Statements and Declarations}. The authors declare that they have no conflicts of interest related to this manuscript. No data were generated or used in the course of this study.

\section{Preliminaries}\label{Sec:Preliminaries}

The following proposition gives an affirmative answer to Question (1), valid in any dimension. While this result may be well known, we include a proof due to the second author and M. Upmanyu. A more general version can be found in \cite{hajra2026generalizedzariskicancellationbrieskornpham}.

\begin{proposition}\label{PBexponents}
    Let $f_\ba =x_1^{a_1} + \dots + x_n^{a_n}$ and let $f_\bb = x_1^{b_1} + \dots + x_n^{b_n}$ where $f_\ba, f_\bb \in \Comp[x_1, \dots, x_n]$. If $V(f_\ba) \isom V(f_\bb)$, then $\ba \sim \bb$. 
\end{proposition}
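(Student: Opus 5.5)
The plan is to reduce to the germ of $V(f_\ba)$ at its unique singular point and read off $\ba$ from an analytic invariant of that germ, namely its Tjurina algebra. Throughout I assume $a_i,b_i\geq 2$, as in the paper's setting. Without this the statement fails: for $\ba=(1,2)$ and $\bb=(1,3)$ both surfaces are $\Aff^1$. Since $\dim V(f_\ba)=n-1$, an isomorphism forces the two tuples to have the same length $n$. We may take $n\geq 2$, the case $n=1$ being degenerate.

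\emph{Step 1: locate the singularity.} Since every $a_i\geq 2$, the Jacobian criterion shows that $\Sing V(f_\ba)=\{0\}$, and likewise $\Sing V(f_\bb)=\{0\}$. Any isomorphism $V(f_\ba)\to V(f_\bb)$ therefore sends $0$ to $0$. It induces an isomorphism of complete local rings
\[
\Comp[[x_1,\dots,x_n]]/(f_\ba)\;\isom\;\Comp[[x_1,\dots,x_n]]/(f_\bb).
\]
Both rings have embedding dimension $n$, because $f_\ba,f_\bb\in\mgoth^2$. The standard lifting argument then gives an automorphism $\phi$ of $\Comp[[x]]$ and a unit $u$ with $f_\bb\circ\phi=u\,f_\ba$, so the two germs are contact equivalent.

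\emph{Step 2: pass to the Tjurina algebra.} The Tjurina algebra $T(f)=\Comp[[x]]/(f,\partial_1 f,\dots,\partial_n f)$ is invariant under contact equivalence; this is standard and follows from the chain rule. Since $f_\ba$ is quasihomogeneous, the Euler relation puts $f_\ba$ in its Jacobian ideal. Hence
\[
T(f_\ba)\isom \Comp[x_1,\dots,x_n]/(x_1^{a_1-1},\dots,x_n^{a_n-1})\isom\bigotimes_i \Comp[t]/(t^{a_i-1}),
\]
and similarly for $\bb$. We thus obtain an isomorphism of local Artinian $\Comp$-algebras $T(f_\ba)\isom T(f_\bb)$.

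\emph{Step 3: recover the multiset of exponents.} Let $k$ be the number of indices with $a_i\geq 3$. Then $k=\dim_\Comp \mgoth/\mgoth^2$ for $T(f_\ba)$, so $k$ is an invariant. Since $n$ is also fixed, the number of indices with $a_i=2$ is determined. The Hilbert series of the associated graded ring $\mathrm{gr}_\mgoth T(f_\ba)$ is likewise an isomorphism invariant, and equals
\[
\prod_i \frac{t^{a_i-1}-1}{t-1}=\frac{\prod_i (t^{a_i-1}-1)}{(t-1)^{n}}.
\]
Because $n$ is known, the numerator $P(t)=\prod_{i:\,a_i\geq 3}(t^{a_i-1}-1)$ is an invariant, up to the harmless factors with $a_i=2$, which contribute $t-1$ and are accounted for by $n$. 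The multiset $\{a_i-1\}$ is recovered from $P$ by peeling. Its largest element $c$ is the largest $d$ such that the cyclotomic polynomial $\Phi_d$ divides $P$. Divide $P$ by $t^c-1$ and repeat. Applying this to both sides gives $\ba\sim\bb$.

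The step I expect to need the most care is Step 1: upgrading an abstract isomorphism of complete local rings to contact equivalence of the defining series, so that the Tjurina algebra is genuinely intrinsic. This follows the usual pattern. First lift the ring isomorphism to an automorphism of $\Comp[[x]]$, which is possible because the embedding dimensions agree. Then note that this automorphism carries the ideal $(f_\ba)$ onto $(f_\bb)$, which produces the unit $u$.
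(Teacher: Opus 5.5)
Your proof is correct and is essentially the paper's argument. Both reduce to the isolated singularity at the origin, use quasihomogeneity to identify the Tjurina algebra with $\Comp[x_1,\dots,x_n]/\langle x_1^{a_1-1},\dots,x_n^{a_n-1}\rangle$, and recover the exponents by repeatedly peeling the largest factor off the Hilbert series $\prod_i(1+t+\cdots+t^{a_i-2})$; the differences are only cosmetic (you prove the easy direction of the Mather--Yau theorem by hand via formal lifting and contact equivalence where the paper cites it, and you peel with cyclotomic polynomials instead of linear factors over $\bar{\Rat}$), and your caveat that $a_i\geq 2$ is needed is correct and implicit in the paper's proof.
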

\begin{proof}
    Let $M_\ba$ and $T_\ba$ respectively denote the Milnor and Tjurina algebras of the singularity of $V(f_\ba)$. Since $f_\ba \in \lb \{ x_i^{a_i-1}\}_{i=1}^n \rb$, $M_\ba = T_\ba$. We obtain
    $$\frac{\Comp\{x_1,\ldots,x_n\}}{\lb x_1^{a_1-1},\ldots,x_n^{a_n-1} \rb} =  M_\ba \isom T_\ba \isom T_\bb \isom M_\bb = \frac{\Comp\{y_1,\ldots,y_n\}}{\lb y_1^{b_1-1},\ldots,y_n^{b_n-1} \rb}$$
    where the middle isomorphism is by \cite[Theorem (i) $\Rightarrow$ (ii)]{MY1982}. Since $(M_\ba, \mgoth_\ba)$ and $(M_\bb, \mgoth_\bb)$ are local rings, any isomorphism must map $\mgoth_\ba$ to $
    \mgoth_\bb$. Consequently, the Hilbert polynomials of $M_\ba$ and $M_\bb$ are equal. Comparing them gives $$\prod\limits_{i=1}^{n}(1+t+t^2+\cdots+t^{a_i-2}) = \prod\limits_{i=1}^{n}(1+t+t^2+\cdots+t^{b_i-2}). $$
    Factoring these two polynomials in $\bar{\Rat}[t]$ into products of linear terms, we deduce that $\underset{1 \leq i \leq n}{\max}\{a_i - 2\} = \underset{1 \leq i \leq n}{\max}\{b_i - 2\}$. Assume without loss of generality that this maximum is achieved at $a_n - 2 = b_n - 2$. Then $a_n = b_n$ and we can divide both sides by $1 + t + \cdots + t^{a_n - 2}$. Repeating this process gives $\ba \sim \bb$.

\end{proof}

\noindent{\it Weighted graphs and their equivalence classes.} 
\begin{nothing}
\label{subsec:weightedGraphs}
    Throughout this article, a \textit{weighted graph} $\Geul$ is a finite connected undirected graph such that 
    \begin{itemize}
        \item no edge connects a vertex to itself
        \item there is at most one edge between each pair of vertices
        \item each vertex is decorated with an integer $n \in \Integ$ call a \textit{weight}.
    \end{itemize} 

Let $\Geul$ be a weighted graph.
\begin{enumerate}[\rm(a)]

\item Let $v$ be a vertex of $\Geul$.
The {\it blowing-up of $\Geul$ at $v$} is the weighted graph obtained from $\Geul$ by adding a vertex $x$ of weight $(-1)$,
adding the edge $\{v,x\}$, and decreasing the weight of $v$ by $1$.

\item Let $\epsilon = \{u,v\}$ be an edge of $\Geul$.
The {\it blowing-up of $\Geul$ at $\epsilon$} is the weighted graph obtained from $\Geul$ by deleting the edge $\epsilon$,
adding a vertex $x$ of weight $(-1)$ and two edges $\{u,x\}$ and $\{v,x\}$,
and decreasing the weights of $u$ and $v$ by $1$.

\end{enumerate}

In (a) and (b), we refer to $x$ as {\it the vertex created by the blowing-up.}

\medskip
The \textit{degree} of a vertex $x$ is the number of edges incident to $x$. A vertex $x$ of a weighted graph $\Heul$ is {\it superfluous\/} if
its weight is $(-1)$,
its degree is $1$ or $2$,
and no two neighbors of $x$ are neighbors of each other.
It is not hard to see that a vertex $x$ of $\Heul$ is superfluous if and only if $\Heul$ is the result of blowing-up some 
weighted graph $\Geul$ at some vertex or edge in such a way that $x$ is the vertex created by that blowing-up.
The graph $\Geul$ is then uniquely determined by the pair $(\Heul,x)$, and is called the {\it blowing-down of $\Heul$ at $x$}.
A weighted graph is {\it minimal\/} if it does not contain any superfluous vertex, or equivalently, if it cannot be blown down.
Weighted graphs $\Geul_1, \Geul_2$ are \textit{equivalent} (and we write $\Geul_1 \sim \Geul_2$) if $\Geul_1$ and $\Geul_2$ can be obtained from one another via a sequence of blowing up and blowing down operations. Moreover, $\sim$ defines an equivalence relation on the set of weighted graphs. (Note that the assumption that our weighted graphs are connected allows us to avoid considering the ``free blowing up" operation which needs to be considered when studying equivalence classes of weighted graphs that are not necessarily connected. See \cite[p.65]{DaigleLinearWeighted} for example.)   
\end{nothing}

\begin{nothing}\label{weightedGraphDef}
    Let $X, Y$ be smooth affine surfaces and let $\tilde{X}$ be a smooth projective surface containing $X$ such that $\partial = \tilde{X} \setminus X$ is an SNC divisor. Since $X$ is affine, $\partial$ is connected (by \cite[Theorem 4.3]{hartshorne1972algebraic} for example). Then $\tilde{X}$ is called a \textit{log smooth compactification of $X$}. We recall the following facts:
    \begin{enumerate}[\rm(a)]
    \item if $\tilde{X}$ and $\hat{X}$ are log smooth compactifications of $X$, then $\hat{X}$ can be obtained from $\tilde{X}$ via a sequence of blowing-up and blowing-down operations; furthermore, the sequence of blowing-up and blowing-down operations can be chosen so that all such operations occur in the complement of $X$; 
    \item the multisets of non-zero genera of curves that appear on the boundaries $\tilde{X} \setminus X$ and $\hat{X} \setminus X$ are equal. 
    \end{enumerate}
    We also define the weighted graph $\Geul(X,\tilde{X})$ (which too is connected) as follows:
    \begin{itemize}
        \item to each irreducible curve $\partial_i \subseteq \Supp \partial$, assign a vertex $v_i$ whose weight $w_i$ is the self-intersection $\partial_i^2$,
        \item given $i \neq j$, connect vertices $v_i$ and $v_j$ by an (undirected) edge $(v_i,v_j)$ if and only if the curves $\partial_i$ and $\partial_j$ intersect. 
    \end{itemize}    
From (a), one obtains
\begin{equation}\label{SNCBoundaryTheorem1}
    \text{if $\tilde{X}$ and $\hat{X}$ are log smooth compactifications of $X$, then $\Geul(X,\tilde{X}) \sim \Geul(X,\hat{X})$} 
\end{equation}
and
\begin{equation}\label{SNCBoundaryTheorem2}
    \text{if $X\isom Y$ and $\tilde{X}$, $\tilde{Y}$ are log smooth compactifications of $X$ and $Y$, then $\Geul(X,\tilde{X}) \sim \Geul(Y,\tilde{Y})$}.  
\end{equation}
\end{nothing}

Our next goal is to prove Lemma \ref{minimalGraph3branches} which is used in Lemma \ref{3branches}. Before doing so we need:

\begin{lemma}\label{shorterSequence}
    Let $(\Heul_0, \dots, \Heul_m)$ be a sequence of weighted graphs such that $m \geq 1$  and such that $\Heul_{i+1}$ is a blow-up of $\Heul_i$ for each $i < m$.  Let $w$ be the vertex of $\Heul_1$ that is created by the first blow-up and view $w$ as a vertex of $\Heul_m$. Suppose that $w$ is a superfluous vertex of $\Heul_m$ and let $\Heul_{m+1}$ be the blowing-down of $\Heul_m$ at $w$.  Then there exists a sequence $(\Keul_0, \dots , \Keul_{m-1})$ of weighted graphs such that $\Keul_0 = \Heul_0$, $\Keul_{m-1} = \Heul_{m+1}$, and $\Keul_{i+1}$ is a blow-up of $\Keul_i$ for each $i < m-1$.
\end{lemma}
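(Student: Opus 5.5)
Induct on $m$ and commute the blow-down of $w$ past the later blow-ups. The key point is that $w$ remains superfluous all the way through, so we can contract it early.

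\emph{Base case $m=1$.} Here $\Heul_1$ is the blow-up of $\Heul_0$ that creates $w$. The blowing-down of $\Heul_1$ at $w$ is uniquely determined by the pair $(\Heul_1,w)$, as recalled in \ref{subsec:weightedGraphs}. Since $\Heul_0$ is such a blowing-down, $\Heul_2=\Heul_0$, and the one-term sequence $\Keul_0=\Heul_0$ works.

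\emph{Key observation.} The weight of a vertex only decreases under blowing-up, by $1$ each time a blow-up is centered at it or at an edge containing it. Its degree never decreases: blowing up at a vertex raises the degree by $1$, and blowing up at an edge keeps it. The weight of $w$ is $-1$ in $\Heul_1$ and $-1$ in $\Heul_m$. Hence none of the blow-ups $\Heul_i\to\Heul_{i+1}$ with $1\le i<m$ is centered at $w$ or at an edge incident to $w$. It follows that $w$ keeps weight $-1$ and keeps the same neighbours in every $\Heul_i$, $1\le i\le m$. Its neighbour set never changes, since any change would require a blow-up at an edge through $w$ or at $w$ itself. No two of these neighbours are adjacent in $\Heul_m$, because $w$ is superfluous there. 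Adjacency between two vertices present in $\Heul_i$ can only be removed by a blow-up at the edge joining them, never created. Therefore no two neighbours of $w$ are adjacent in any $\Heul_i$, $1\le i\le m$.

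\emph{Inductive construction.} For $1\le i\le m$, let $\Heul_i'$ be the blowing-down of $\Heul_i$ at $w$; this makes sense because $w$ is superfluous in each $\Heul_i$ by the observation. Then $\Heul_1'=\Heul_0$ by the base case, and $\Heul_m'=\Heul_{m+1}$. It remains to check that $\Heul_{i+1}'$ is a blow-up of $\Heul_i'$ for $1\le i<m$; setting $\Keul_{j}=\Heul_{j+1}'$ then gives the required sequence.

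Write $c$ for the center of the blow-up $\Heul_i\to\Heul_{i+1}$. By the observation, $c$ is a vertex other than $w$, or an edge not containing $w$. In both cases $c$ survives in $\Heul_i'$: a vertex other than $w$ is a vertex of $\Heul_i'$, and an edge $\{u,v\}$ of $\Heul_i$ with $w\notin\{u,v\}$ is still an edge of $\Heul_i'$, because blowing down only adds an edge between the neighbours of $w$ and never deletes one. I will show that blowing up $\Heul_i'$ at $c$ gives $\Heul_{i+1}'$. The operations "blow up at $c$" and "blow down at $w$" change the weights and edges near $c$ and near $w$ respectively. Both operations only add or subtract integers from weights, so their effects on a common vertex simply add up.

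\emph{Main obstacle: the degree-$2$ case.} Suppose $w$ has two neighbours $u,v$. Blowing down $w$ creates the edge $\{u,v\}$, and I need that $c$ is not this edge. In $\Heul_i$ the vertices $u$ and $v$ are not adjacent, by the observation, so $c$ is an edge of $\Heul_i$ different from $\{u,v\}$. Hence the new edge $\{u,v\}$ of $\Heul_i'$ is untouched by blowing up at $c$, and the operations commute. With this settled, comparing vertex sets, edges and weights is routine.
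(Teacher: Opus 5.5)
Your proof is correct. It rests on the same key observation as the paper's: $w$ has weight $-1$ in both $\Heul_1$ and $\Heul_m$, so no later blow-up is centered at $w$ or at an edge through $w$, and therefore the creation of $w$ commutes with all of them. You organize the argument differently. The paper inducts on $m$ and swaps only the first two blow-ups: it blows up $\Heul_0$ at $\xi_1$ first and then at $\xi_0$, which again creates $w$, and applies the inductive hypothesis to the shorter sequence $(\Keul_1,\Heul_2,\dots,\Heul_m,\Heul_{m+1})$. You instead contract $w$ at every stage and check that each square of the resulting ladder commutes. This gives an explicit description: $\Keul_j$ is the blowing-down of $\Heul_{j+1}$ at $w$, obtained from $\Heul_0$ by blowing up at the original centers $\xi_1,\dots,\xi_j$. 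Iterating the paper's swap produces exactly the same graphs. The cost of your direct route is that you must verify $w$ is superfluous in every intermediate $\Heul_i$. That is why you need the extra facts that blow-ups never create edges between existing vertices, and that the center is never the new edge $\{u,v\}$. The paper needs only the weight argument at the first step, plus the tacit check that the swapped centers still make sense in $\Heul_0$ and $\Keul_1$. Finally, although you announce an induction on $m$, you never use an inductive hypothesis. The ``base case'' only identifies the blowing-down of $\Heul_1$ at $w$ with $\Heul_0$, so you can present the argument as a direct construction.
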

\begin{proof}
    We proceed by induction on $m$, the case $m=1$ being clear.
Suppose that $m>1$ and consider $(\Heul_0, \dots, \Heul_m, \Heul_{m+1})$ as in the statement.
Define $\xi_0$ and $\xi_1$ by declaring that, for each $i \in \{0, 1\}$, $\Heul_{i+1}$ is the blowing-up of $\Heul_i$ at $\xi_i$,
where $\xi_i$ is either a vertex or an edge of $\Heul_i$.
If $\xi_1$ is the vertex $w$ or an edge of $\Heul_1$ incident to $w$ then the weight of $w$ in $\Heul_2$ is strictly less than $-1$;
this implies that the weight of $w$ in $\Heul_m$ is strictly less than $-1$, which contradicts the hypothesis that $w$ is a superfluous vertex of $\Heul_m$.
So
\begin{equation*}
\text{$\xi_1$ is not $w$ and $\xi_1$ is not an edge of $\Heul_1$ incident to $w$.}
\end{equation*}
It follows that the first two blowups of the sequence $\Heul_0 , \Heul_1 , \dots,  \Heul_m$ can be performed in the opposite order.
In other words,
\begin{itemize}

\item $\xi_1$ is a vertex or edge of $\Heul_0$, so it makes sense to define the weighted graph $\Keul_1$ to be the blowing-up of $\Heul_0$ at $\xi_1$;

\item $\xi_0$ is a vertex or edge of $\Keul_1$, and the blowing-up of $\Keul_1$ at $\xi_0$ is $\Heul_2$.

\end{itemize}
Also define $\Keul_0 = \Heul_0$.
The situation is represented schematically by the following picture:
$$
\xymatrix{
\Heul_1 \ar[d]_-{\xi_0} & \ar[l]_-{\xi_1} \Heul_2 \ar[d]^-{\xi_0} \\
\Heul_0 = \Keul_0 & \ar[l]^-{\xi_1} \Keul_1 
}
$$
where we write $\Ueul \xleftarrow{ \ \xi \ } \Veul$ to indicate that $\Veul$ is the blowing-up of
the weighted graph $\Ueul$ at $\xi$, where $\xi$ is either a vertex or an edge of $\Ueul$.
Moreover, the vertex of $\Heul_2$ created by the blowing-up $\Keul_1 \xleftarrow{ \ \xi_0 \ } \Heul_2$ is $w$.
So $(\Keul_1, \Heul_2, \dots, \Heul_m, \Heul_{m+1})$ satisfies the hypothesis of the Lemma and is shorter than $(\Heul_0, \dots, \Heul_m, \Heul_{m+1})$.
By the inductive hypothesis, there exist $\Keul_2,\dots,\Keul_{m-1}$ such that
$\Keul_{m-1} = \Heul_{m+1}$ and $\Keul_{i+1}$ is a blowup of $\Keul_i$ for each $i < m-1$.
Then $(\Keul_0,\dots,\Keul_{m-1})$ satisfies the requirements of the Lemma.
\end{proof}

\begin{lemma}\label{minimalGraph3branches}
    Let $\Geul$ be a star-shaped weighted graph whose central vertex $v_0$ has degree at least 3. If every vertex of $\Geul$ other than $v_0$ has weight at most $-2$, then $\Geul$ is the unique minimal graph in its equivalence class. 
\end{lemma}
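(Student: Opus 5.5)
First, $\Geul$ is minimal: a superfluous vertex must have weight $-1$. Every vertex other than $v_0$ has weight $\le -2$, and $v_0$ has degree at least $3$, so none of them can be superfluous. The real content is therefore uniqueness. Suppose $\Geul'$ is minimal and $\Geul' \sim \Geul$. Then there is a finite sequence of blow-ups and blow-downs from $\Geul$ to $\Geul'$. The plan is to show that such a zig-zag can be rearranged into a sequence of blow-ups $\Geul = \Heul_0, \Heul_1, \dots, \Heul_m$ followed by blow-downs $\Heul_m \to \dots \to \Geul'$. Once the problem is in that shape, the blow-downs should undo the blow-ups and Lemma \ref{shorterSequence} shortens the sequence.

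\textbf{Step 1 (normal form).} Among all sequences connecting $\Geul$ to $\Geul'$, I would choose one of minimal length. I would then argue by a diamond/commutation argument that a blow-down immediately followed by a blow-up can be replaced by blow-ups followed by blow-downs. This is the standard argument for weighted graphs, as in \cite{DaigleLinearWeighted} or \cite{daigle2001weighted}; alternatively one can argue directly. If the blown-down vertex and the new centre are "far apart", the two operations commute. If they interact, either they cancel, which gives a shorter sequence, or they can be reordered. Either way, every equivalence can be written as $\Geul \leftarrow \Heul_m \rightarrow \Geul'$ with both arrows being composites of blow-ups.

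\textbf{Step 2 (the blow-downs undo the blow-ups).} Consider the first blow-down $\Heul_m \to \Heul_{m+1}$ at a superfluous vertex $x$. I claim $x$ is a vertex created by one of the blow-ups $\Heul_0 \to \dots \to \Heul_m$, rather than a vertex of $\Geul$. Blow-ups never raise weights or lower degrees. In $\Heul_m$, every original vertex other than $v_0$ still has weight $\le -2$, and $v_0$ still has degree $\ge 3$. So no original vertex can be superfluous.

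Let $x$ be created at step $j$. Applying Lemma \ref{shorterSequence} to the subsequence $\Heul_{j-1}, \dots, \Heul_m$, I replace $\Heul_0 \to \dots \to \Heul_m \to \Heul_{m+1}$ by a chain of $m-1$ blow-ups from $\Geul$ ending at $\Heul_{m+1}$. The hypothesis of the lemma is satisfied because $x$ is the first vertex created in that subsequence. Inducting on the number of blow-downs, $\Geul'$ is obtained from $\Geul$ by $m-k$ blow-ups, where $k$ is the number of blow-downs.

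If $m-k>0$, then $\Geul'$ contains the vertex created by the last blow-up. That vertex has weight $-1$ and degree $1$ or $2$, and since its neighbours were not adjacent to each other, it is superfluous. This contradicts the minimality of $\Geul'$. Hence $m=k$ and $\Geul'=\Geul$.

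\textbf{Main obstacle.} I expect Step 1 to be the hardest part: the reordering of blow-downs past blow-ups, in particular the case where a blow-up is performed at or next to a vertex that was just blown down, so that cancellation must be handled carefully. If proving it directly becomes messy, I would instead cite the known result that every equivalence between weighted graphs factors through a common blow-up, as in the references on weighted graphs above.
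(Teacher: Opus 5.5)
Your argument is correct, but it takes a longer route than the paper's. The step you single out as the main obstacle is one the paper never needs.

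The paper fixes a zig-zag $\Geul=\Geul_0,\dots,\Geul_n=\Geul'$ of minimal length. If $n>0$, the last move must be a blow-down, since otherwise $\Geul'$ would not be minimal. The paper then looks only at the \emph{first} blow-down $\Geul_j\to\Geul_{j+1}$. The prefix $\Geul_0,\dots,\Geul_j$ is automatically a chain of blow-ups starting at $\Geul$, so your Step 2 reasoning applies exactly once. The original vertices cannot be superfluous in $\Geul_j$, so the blown-down vertex was created by some blow-up $\Geul_{i-1}\to\Geul_i$. Lemma \ref{shorterSequence}, applied to $(\Geul_{i-1},\dots,\Geul_{j+1})$, replaces those $j-i+2$ moves by $j-i$ blow-ups, contradicting the minimality of $n$. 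So no blow-downs need to be commuted past blow-ups, and no induction on the number of blow-downs is needed.

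Your Step 1 is nonetheless true, and the case check is routine. Suppose you blow down a superfluous vertex $x$ and then blow up at $\xi$. If $\xi$ was already present before the blow-down, blow up at $\xi$ first. Then $x$ stays superfluous: its weight, degree and neighbours are unchanged, and a blow-up never makes two old vertices adjacent. So the two moves commute. The only other case is that $\xi$ is the edge $\{u,v\}$ created by blowing down a degree-two $x$, and then the two moves cancel.

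What your route buys is the general fact that any two equivalent weighted graphs have a common blow-up, which holds without the star-shaped hypothesis. For this lemma, the paper's minimal-length argument is shorter and self-contained.
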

\begin{proof}
Let $\Geul’$ be any minimal element of the equivalence class of $\Geul$. Then there exists a sequence of weighted graphs
\begin{equation}
    \Geul_0, \Geul_1, \dots, \Geul_n
\end{equation}
such that $\Geul_0 = \Geul, \Geul_n = \Geul'$ and for each $i = 0, \dots, n-1$, $\Geul_{i+1}$ is either a blowing-up or a blowing-down of $\Geul_i$. Of all such sequences, fix $\Geul_0, \dots , \Geul_n$ as the shortest one. We claim that $n=0$. Arguing by contradiction, assume that $n > 0$. Observe that if $\Geul_n$ is a blowing-up of $\Geul_{n-1}$ then $\Geul_n = \Geul'$ is not minimal, contradicting the assumption that $\Geul'$ is minimal. So $\Geul_n$ is a blowing-down of $\Geul_{n-1}$.  In particular, the set $D = \setspec{ i}{\Geul_{i+1} \text{ is a blowing down of $\Geul_{i}$}}$ is non-empty. Let $j$ be the least element of $D$, so that $(\Geul_0, \dots, \Geul_j)$ is a sequence of blowing-ups and $\Geul_{j+1}$ is a blowing-down of $\Geul_j$.  Consider a vertex $v$ of $\Geul_0$; then $v$ exists in $\Geul_j$, 
\begin{equation}\label{graph1}
    \text{$\deg(v)$ in $\Geul_j$  is greater than or equal to $\deg(v)$ in $\Geul_0$}
\end{equation}
and     
\begin{equation}\label{graph2}
    \text{the weight of $v$ in $\Geul_j$  is less than or equal to the weight of $v$ in $\Geul_0$.}
\end{equation}
Since every non-central vertex of $\Geul_0$ has weight at most $-2$ and the central vertex of $\Geul_0$ has degree at least 3, \eqref{graph1} and \eqref{graph2} imply that $v$ cannot be a superfluous vertex of $\Geul_j$. Now, $\Geul_{j+1}$ is the blowing-down of $\Geul_j$ at some superfluous vertex $w$; furthermore, $w$ is not a vertex that was present in $\Geul_0$, so $w$ was created by one of the blowings-up in the sequence $(\Geul_0, \dots , \Geul_j)$. That is, there exists $i  \leq j$ such that $w$ is the vertex created by the blowing-up $(\Geul_{i-1}, \Geul_i)$. 
Applying Lemma \ref{shorterSequence} to the sequence $(\Heul_0, \dots, \Heul_m, \Heul_{m+1}) = (\Geul_{i-1}, \Geul_i, \dots , \Geul_j, \Geul_{j+1})$ shows that $\Geul_{j+1}$ can be obtained from $\Geul_{i-1}$ by performing a sequence of $(j-i)$ blowups, so 2 fewer operations than in $(\Geul_{i-1}, \Geul_i, \dots, \Geul_j, \Geul_{j+1})$.  This contradicts the minimality of the sequence $(\Geul_0, \dots, \Geul_n)$.
\end{proof}

\noindent {\it Continued Fractions and Cyclic Quotient Singularities.}

\begin{nothing}\label{rationalSingularities}\cite{hirzebruch1953vierdimensionale}, \cite{daigle2001weighted} 
    It is well known that cyclic quotient singularities are rational and that the exceptional locus in a resolution of a rational surface singularity is a tree of rational curves \cite{ArtinIsolatedSingularities}. The configuration of these curves and their self-intersections can be computed as follows. 

    Given $a_1, \dots, a_r \in \Nat_{\geq 1}$, $[a_1,a_2,\dots, a_r]$ denotes the Hirzebruch-Jung continued fraction 
    $$a_1 - \cfrac{1}{a_2 - \cfrac{1}{a_3 - \cfrac{1}{ \cdots - \cfrac{1}{a_r}}}}.$$
    We note that every positive rational number $\frac{n}{m}$ admits a unique representation as a Hirzebuch-Jung continued fraction.  

    Recall that a $\frac{1}{n}(m,1)$ singularity is defined to be the singularity at the origin $p$ of the variety $X = \Spec(\Comp[x,y]^G)$ where $G$ is the  action of $\Integ / n\Integ$ on $\Comp[x,y]$ by $\zeta \cdot x = \zeta^m x$, $\zeta \cdot y = \zeta y$. Let $[a_1, \dots, a_r]$ denote the Hirzebruch-Jung continued fraction of $\frac{n}{m}$. Let $\pi : \widetilde{X} \to X$ denote the minimal resolution of $X$. Then $\pi^{-1}(p)$ is a chain of rational curves $E_1, \dots, E_r$ that satisfy $E_i^2 = -a_i$, $E_1$ intersects $\widetilde{V(y)}$ transversely and $E_k$ intersects $\widetilde{V(x)}$ transversely. Conversely, if $q$ is a $\frac{1}{n'}(m',1)$ singularity such that $\pi^{-1}(q)$ is a chain of rational curves $D_1, \dots, D_k$ such that $D_i^2 = E_i^2$ for all $i = 1, \dots, k$, $D_1$ intersects $\widetilde{V(y)}$ transversely and $D_k$ intersects $\widetilde{V(x)}$ transversely, then $n = n'$ and $m = m'$.
\end{nothing}

\noindent {\it Some Basic Birational Geometry.}

\begin{proposition}\label{IsoGeneralization}
Let $X$ and $Y$ be normal projective varieties over $\bk$, and suppose that $K_X$ and $K_Y$ are $\Rat$-Cartier Weil divisors. Let $f: X \dashrightarrow Y$ be a birational map that restricts to an isomorphism on a set $U \subseteq X$ such that $\codim_X(X \setminus U) \geq 2$, $\codim_Y(Y \setminus f(U)) \geq 2$.
\begin{enumerate}[\rm(a)]
    \item If $K_X$ is ample, then $K_Y$ is non-trivial. 
    \item If $K_X$ and $K_Y$ are ample, then $X$ and $Y$ are isomorphic.
\end{enumerate}

\end{proposition}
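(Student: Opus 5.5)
The key observation is that $f$ identifies the sheaves of reflexive differentials on $X$ and $Y$ away from codimension two. Since $X$ and $Y$ are normal, removing a closed subset of codimension at least $2$ changes neither the Weil divisor class group nor the sections of reflexive sheaves. Concretely, the pushforward and restriction maps identify $\Cl(X)$, $\Cl(U)$ and $\Cl(f(U))$ with $\Cl(Y)$, with canonical divisors matching: $f_*K_X \sim K_Y$. For every $m \in \Integ$ we get
$$H^0(X, \OSheaf_X(mK_X)) \isom H^0(U, \OSheaf_U(mK_U)) \isom H^0(f(U), \OSheaf(mK_{f(U)})) \isom H^0(Y, \OSheaf_Y(mK_Y)),$$
because $\OSheaf_X(mK_X)$ is reflexive, hence $S_2$, and so its sections extend over codimension-two sets. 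Compatibility with multiplication gives a graded $\bk$-algebra isomorphism between the canonical rings
$$R(X,K_X) = \bigoplus_{m \geq 0} H^0(X,\OSheaf_X(mK_X)) \isom R(Y,K_Y).$$

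For (a), suppose $K_Y \sim 0$. Then $\OSheaf_Y(mK_Y)\isom \OSheaf_Y$ for all $m$, so $h^0(Y, mK_Y) = 1$ for every $m \geq 0$ because $Y$ is projective and integral. By the identification above, $h^0(X, mK_X) = 1$ for all $m \geq 0$. But $K_X$ ample means some $mK_X$ is very ample on a projective variety of positive dimension, which forces $h^0(X, mK_X) \geq 2$. This is a contradiction, so $K_Y \not\sim 0$. (If $\dim X = 0$ everything is trivial; the case of interest is $\dim X \ge 1$.) One could also argue $K_X = f^{-1}_*K_Y \sim 0$ directly, since $f^{-1}$ identifies class groups in the same way.

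For (b), I would use that an ample $\Rat$-Cartier Weil divisor $D$ on a normal projective variety $Z$ satisfies $Z \isom \Proj R(Z,D)$. To see this, pick $n$ with $nD$ Cartier and very ample. The Veronese subring $R(Z,D)^{(n)} = \bigoplus_k H^0(Z,\OSheaf_Z(knD))$ is the section ring of an ample line bundle, so $\Proj R(Z,D)^{(n)} \isom Z$ by the standard fact for ample invertible sheaves. Taking a Veronese subring does not change $\Proj$. Applying this to $K_X$ and $K_Y$ and using the graded isomorphism of canonical rings gives
$$X \isom \Proj R(X,K_X) \isom \Proj R(Y,K_Y) \isom Y.$$
In fact, this isomorphism restricts to $f$ on $U$, since the identification of sections was induced by $f$, so $f$ itself extends to an isomorphism.

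The main obstacle is the first step: checking carefully that $f|_U$ carries $K_U$ to $K_{f(U)}$ (canonical divisors correspond under isomorphisms) and that reflexive sections extend over codimension-$\geq 2$ loci for \emph{all} multiples $mK$, not just Cartier ones. Both points follow from normality via Hartogs-type extension: $\OSheaf_X(D)$ is the sheaf of rational functions $\phi$ with $\operatorname{div}\phi + D \geq 0$, and this condition is checked only at codimension-one points.
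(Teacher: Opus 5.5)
Your proposal is correct and follows essentially the same route as the paper. You identify $H^0(X,\OSheaf_X(mK_X))$ with $H^0(Y,\OSheaf_Y(mK_Y))$ via $f|_U$ and extension of sections over the codimension-two complements. For (a) you play $h^0=1$ when $K_Y\sim 0$ against the very ampleness (equivalently, growth of $h^0$) of multiples of $K_X$, and for (b) you take $\Proj$ of Veronese subrings of the canonical rings. The only difference is cosmetic: you work with all multiples $mK$ via reflexivity, whereas the paper restricts to multiples $mrK$ with $rK_X$ and $rK_Y$ Cartier.
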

\begin{proof}
The indeterminacy locus $Z_X \subseteq X \setminus U$ satisfies $\codim_X(Z_X) 
\geq 2$. Let $V = f(U)$, let $Z_Y$ denote the indeterminacy locus of $f^{-1}: Y \dashrightarrow X$ and observe that $Z_Y \subseteq Y \setminus V$. Since $K_X$ and $K_Y$ are $\Rat$-Cartier, there exists a positive integer $r$ such that $rK_X$ and $rK_Y$ are Cartier. Since $f: U \to V$ is an isomorphism of open sets, for each $m \geq 0$, we have a canonical isomorphism $f^*\OSheaf_{V}(mrK_V) \cong \OSheaf_U(mrK_U).$
Since $\codim_X(X\setminus U) \geq 2$ and $\codim_Y(Y \setminus V) \geq 2$, we have $K_X|_U = K_U$ and $K_Y|_V = K_V$; thus we obtain a pullback map
on sections:
$$
f^*: H^0(V, \OSheaf_Y(mrK_Y)|_V) \longrightarrow H^0(U, \OSheaf_X(mrK_X)|_U).
$$
Since $Y \setminus V$ and $X \setminus U$ are sets of codimension at least two, the vertical arrows in the diagram below are isomorphisms, inducing a commutative diagram of isomorphisms for every $m \geq 0$. 
\[
\begin{tikzcd}
H^0\!\left(Y,\OSheaf_Y(mrK_Y)\right)
\arrow[r]
\arrow[d]
&
H^0\!\left(X,\OSheaf_X(mrK_X)\right)
\arrow[d]
\\
H^0\!\left(V,\OSheaf_Y(mrK_Y)|_V\right)
\arrow[r]
&
H^0\!\left(U,\OSheaf_X(mrK_X)|_U\right)
\end{tikzcd}
\]
For (a), assume $K_X$ is ample. Then $\dim_\bk H^0(X,\OSheaf_X(mrK_X))$ approaches infinity for $m >>0$. If $K_Y = 0$, then $\dim_\bk H^0(X,\OSheaf_X(mrK_X)) = \dim_\bk H^0(Y,\OSheaf_X(mrK_Y)) = 1$ for all $m$, a contradiction. 

For (b), we now assume that both $K_X$ and $K_Y$ are ample. Without loss of generality we may assume that the $r$ chosen above is such that $rK_X$ and $rK_Y$ are both very ample Cartier divisors. We then obtain an isomorphism of graded $\bk$-algebras:
$$
f^*: R(Y, K_Y)^{(r)} \overset{\sim}\rightarrow R(X, K_X)^{(r)},
$$
where $R(X, K_X)^{(r)} = \bigoplus_{m \geq 0} H^0(X, \OSheaf_X(mrK_X))$ is the $r^{th}$ Veronese subring of $\bigoplus_{m \geq 0} H^0(X, \OSheaf_X(mK_X))$. This gives
$$
Y \isom \Proj (R(Y, K_Y)^{(r)}) \cong \Proj (R(X, K_X)^{(r)}) \cong X, 
$$   
the first and last isomorphisms because $rK_X$ and $rK_Y$ are Cartier and very ample. 
\end{proof}

\begin{proposition}\label{mapExtends}
    Let $X$ and $Y$ be normal projective surfaces with at most rational singularities. Suppose $f:X \dashrightarrow Y$ is a rational map that restricts to an isomorphism $f|_{X\setminus C_1} : X \setminus C_1 \to Y \setminus C_2$ where $C_1$ and $C_2$ are irreducible smooth curves. If $p_g(C_1) > 0$ or $p_g(C_2) > 0$, then $f$ restricts to an isomorphism on its domain of definition.
\end{proposition}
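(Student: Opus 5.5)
We will show that $f$ has no indeterminacy points and that $f^{-1}$ has none either. The main tool is the elimination of indeterminacy for a birational map of surfaces, together with the fact that rational singularities have trees of rational exceptional curves (\ref{rationalSingularities}). Note first that $f$ is birational, being an isomorphism on dense opens. It sends the generic point of $C_1$ either to a point of $Y$ or onto a curve. Since $X\setminus C_1\to Y\setminus C_2$ is an isomorphism, the only curve available is $C_2$. Symmetrically, $f^{-1}$ either contracts $C_2$ or sends it birationally onto $C_1$.

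Step 1 is to rule out contraction. Suppose $f$ contracts $C_1$. Take minimal resolutions $\tilde X\to X$ and $\tilde Y\to Y$, and a resolution of indeterminacy $Z\to\tilde X$ by blow-ups with $g\colon Z\to \tilde Y$ a morphism. The strict transform $\tilde C_1\subset Z$ is then contracted by $g$. Factor $g$ as a sequence of blow-downs of $(-1)$-curves. Every curve contracted by $g$ is smooth rational: indeed $g^{-1}$ of a point is a tree of $\mathbb{P}^1$'s, since each blow-up of a smooth point introduces a rational curve. But $\tilde C_1\cong C_1$ has positive genus, a contradiction.

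If instead $p_g(C_2)>0$ and $C_1$ is contracted, then $f^{-1}$ does not contract $C_2$, so $C_2$ is the image of some curve of $X$. That curve can only be $C_1$, which is impossible. Hence in either case $C_1$ and $C_2$ correspond birationally, and both have the same positive genus.

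Step 2 is to show $f$ is defined everywhere, which I expect to be the main obstacle. Suppose $p\in C_1$ is an indeterminacy point. Then, on a resolution $Z$ of $f$ as above, the total transform of $p$ in $Z$ (including the exceptional curves of $\tilde X\to X$ over $p$ if $p$ is singular) is a connected union of rational curves, and some component $E$ of it maps onto a curve in $Y$. That image curve cannot lie in $Y\setminus C_2$, because there $f^{-1}$ is an isomorphism. Nor can it be a curve of $\tilde Y$ exceptional over $Y$, since such curves map to points of $Y$. So the image is $C_2$, or more precisely $\tilde C_2$. But $E$ is rational and $\tilde C_2$ has positive genus, so $E$ cannot dominate $\tilde C_2$. (Here Lüroth's theorem, or Riemann--Hurwitz, shows there is no dominant map from $\mathbb{P}^1$ to a curve of positive genus.) This is a contradiction, so $f$ is a morphism.

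Step 3 applies the same argument to $f^{-1}$: it is a morphism, using $p_g(C_1)=p_g(C_2)>0$ from Step 1. Then $f$ and $f^{-1}$ are mutually inverse morphisms, so $f$ is an isomorphism $X\to Y$ and in particular an isomorphism on its domain of definition. The delicate point throughout is checking that every curve in the total transform of an indeterminacy point, or contracted to a point, is rational. This is where rationality of the singularities enters, via the tree-of-$\mathbb{P}^1$'s description of their resolutions.
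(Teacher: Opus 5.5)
Your argument is correct, and it proves more than the statement asks. You show that $f$ has no indeterminacy points and is in fact a biregular isomorphism $X\to Y$ carrying $C_1$ onto $C_2$. The paper only concludes that $f$ is an isomorphism off its finite indeterminacy locus.

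\textbf{Where the two proofs agree.} Your Step 1 is essentially the paper's argument: a positive-genus $\widetilde{C_1}$ cannot be contracted. In the case $p_g(C_2)>0$, the paper shows the curve dominating $C_2$ must be $\widetilde{C_1}$ by using surjectivity of $\tilde f$; you get the same conclusion by applying the argument to $f^{-1}$.

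\textbf{What is new.} Your Step 2 is the genuinely new ingredient.
\begin{enumerate}[(1)]
\item The total transform of an indeterminacy point must map onto a curve. This uses normality of $X$ via Zariski's Main Theorem, which you should cite.
\item That curve lies in $C_2$. On $g^{-1}(Y\setminus C_2)$ the morphisms $\pi$ and $f^{-1}\circ g$ agree by separatedness, so nothing over $C_1$ lands in $Y\setminus C_2$. Your phrase ``cannot lie in $Y\setminus C_2$'' should read ``cannot meet $Y\setminus C_2$''.
\item The total transform consists of rational curves, and a rational curve cannot dominate $C_2$.
\end{enumerate}

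\textbf{What this buys.} The payoff is visible in Lemma \ref{theEasyPart}. With your version, $X_\ba\cong X_\bb$ and $\bb\in S_+$ give $\overline{X_\ba}\cong\overline{X_\bb}$ outright. So Proposition \ref{IsoGeneralization}, which passes from an isomorphism in codimension two to a global one via ampleness or non-triviality of $K$, is no longer needed there. The amplitude comparison and Theorem \ref{esser} then apply directly.

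\textbf{One small omission in Step 1.} Knowing that $f$ sends $C_1$ to a point does not yet mean that $g$ contracts $\widetilde{C_1}\subset Z$. A priori $\widetilde{C_1}$ could map onto an exceptional curve of $\tilde Y\to Y$. This is excluded because those curves are rational, since $Y$ has rational singularities. That is exactly the point the paper signals with ``because all singular points of $Y$ are rational,'' and you already use the same fact in Step 2.
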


\begin{proof}
    The rational map $f$ is defined everywhere except at finitely many points $Z = \{p_1, \dots p_k\}$ of $X$ that lie along $C_1$. Let $U$ denote the domain of definition of $f$. Resolving the indeterminacy of $f$ we obtain the following commutative diagram:
    \[
\begin{tikzcd}
\widetilde{X}
\arrow[d, "\pi"']
\arrow[dr, "\widetilde{f}"] 
& \\
X 
\arrow[r, dashed, "f"'] 
& Y
\end{tikzcd}
\]
where $\tilde{f}$ is a morphism that extends the rational map $f$. Because $X$ has at most rational singularities, the total transform $\pi^{-1}(C_1)$ is supported on $\widetilde{C_1} \cup \bigcup_{i = 1}^n E_i$ where $E_i \isom \PPP^1$ for all $i$ (by \ref{rationalSingularities}); furthermore since $C_1$ is smooth, $\widetilde{C_1} \isom C_1$. Now $\tilde{f}$ is a dominant morphism of projective surfaces and so it is surjective. 

Assume $p_g(C_1) > 0$. Then $\widetilde{C_1}$ cannot be contracted to a point by $\tilde{f}$ because all singular points of $Y$ are rational. So $\tilde{f}$ restricts to an isomorphism on $\tilde{f}|_{\tilde{X} \setminus \bigcup E_i} : \tilde{X} \setminus \bigcup E_i \to Y \setminus \tilde{f}(\bigcup E_i)$. Since $\tilde{f}$ extends the morphism $f|_U$, we have $f|_U = f|_{X \setminus Z}$ is an isomorphism. This proves the $p_g(C_1) > 0$ case.   
Assume $p_g(C_2) > 0$. By surjectivity of $\tilde{f}$, there exists a curve $C_k$ supported on $\pi^{-1}(C_1)$ whose image under $\tilde{f}$ is $C_2$. Since there is no dominant morphism from $\PPP^1$ to a curve of higher genus, $p_g(C_k)>0$ and hence $C_k = \widetilde{C_1}$. Since $\widetilde{C_1} \isom C_1$, $C_1$ has positive genus and the result follows from the case $p_g(C_1) > 0$.  

\end{proof}

\noindent {\it Quasismooth hypersurfaces.}

\begin{nothing}
Let $S = \bk_{w_0, \dots, w_n}[x_0, \dots, x_n]$ denote the graded polynomial ring where $n \geq 1$ and $\deg(x_i) = w_i$ for $i = 0, \dots, n$. The weighted projective space $\PPP = \PPP(w_0, \dots , w_n) = \Proj(S)$ is said to be \textit{well-formed} if $\gcd(w_0,\dots,w_{i-1},\hat{w_i},w_{i+1}, \dots , w_n) = 1$ for each $i = 0, \dots, n$. Every weighted projective space is a projective variety and is isomorphic to a well-formed weighted projective space. 

A \textit{weighted projective variety} $X$ is a closed subvariety of a weighted projective space. Whenever we write ``the variety $X \subseteq \PPP$", we mean that $X$ is a closed subvariety of $\PPP$. Since $\PPP(w_0, \dots , w_n)$ is a projective variety, every weighted projective variety is also a projective variety.  

Let $I$ be a homogeneous prime ideal of the graded ring $S = \Comp_{w_0,\dots,w_n}[x_0, \dots, x_n]$
	and consider the closed subvariety $X = V_+(I)$ of $\PPP=\PPP(w_0,\dots,w_n) = \Proj(S)$.  Note that $X$ is isomorphic to $\Proj( S/I )$.
	The closed subset $C_X = V(I) \subseteq \aff^{n+1}$ is called the  \textit{affine cone over $X$}; $C_X$ passes through the origin of $\aff^{n+1}$, and that $C_X \isom \Spec( S/I )$ is an integral affine scheme.	The variety $X$ is \textit{quasismooth} if $C_X$ is nonsingular away from the origin. In case $I = \lb f \rb \lhd S$ is principal, we say that $X = V_+(f)$ is a \textit{weighted projective hypersurface}. A \textit{linear cone} is a weighted hypersurface $V_+(f) \subseteq \PPP(w_0, \dots, w_n)$ such that $\deg(f) = w_i$ for some $i = 0, \dots, n$. A weighted projective hypersurface $X \subseteq \PPP(w_0, \dots, w_n)$ is \textit{well-formed} if the following two conditions hold:
    \begin{enumerate}[\rm(i)]
        \item $\PPP(w_0, \dots, w_n)$ is well-formed,
        \item $\codim_X(X \cap \Sing(\PPP(w_0, \dots, w_n)) \geq 2$.
    \end{enumerate}
\end{nothing}
\begin{remarks}\label{quasismoothRemark}
    Let $X = V_+(f) \subseteq \PPP(w_0, \dots, w_n) = \Proj S$ be a well-formed quasismooth weighted hypersurface of degree $d$. The integer $\alpha = d - \sum_{i = 0}^n w_i$ is called the \textit{amplitude of $X$}. \footnote{This is just a very special case of the amplitude of a quasismooth weighted complete intersection. See \cite{iano-fletcher_2000} for example.} The following properties are well-known:
    \begin{enumerate}[\rm(a)]
        \item $X$ is normal,
        \item $\Sing(X) = X \cap \Sing(\PPP)$, 
        \item $X$ has at most cyclic quotient singularities,
        \item the canonical divisor $K_X$ is ample if and only if $\alpha > 0$,  
        \item the geometric genus of $X$ is $p_g(X) = \dim_\bk H^{\dim(X)}(X, \OSheaf_X) = \dim_\bk(S / \lb f \rb)_\alpha$. 
    \end{enumerate}
    Parts (a),(d), and (e) appear in \cite{dolgachev}. Part (b) is \cite[Proposition 8]{Dimca1986} . Part (c) is given in \cite[1.3.3]{iano-fletcher_2000}. 
\end{remarks}   
    
    We require the following theorem:

\begin{theorem}[cf. {\cite[Theorem 2.1]{esser2024automorphisms}}]
\label{esser}

     Let $X \subset \PPP(w_0,\dots, w_{n+1})$ and $X' \subset \PPP(w_0', \dots, w_{n+1}')$ be two complex weighted projective hypersurfaces of weighted degrees $d$ and $d'$ respectively. Suppose further that $X$ and $X'$ are well-formed and quasismooth, neither is a linear cone, and one of the following holds:
     \begin{enumerate}[\rm(1)]
     \item $n \geq 3$ or
     \item $n = 2$ and $w_0 + w_1 + w_2 + w_3 \neq d$.
     \end{enumerate}
     Then, if $g : X' \to X$ is an isomorphism, we have $d = d'$, the $w_i$ coincide with the $w_i'$ up to
rearrangement and $g$ is induced by an automorphism of $\PPP(w_0,\dots , w_{n+1})$.
\end{theorem}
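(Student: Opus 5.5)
Since the statement is quoted from \cite[Theorem 2.1]{esser2024automorphisms}, in the paper we would simply cite it. If we had to prove it, the plan is to reconstruct the graded coordinate ring $S/\lb f\rb$ intrinsically from $X$, up to graded isomorphism. Then show that a graded isomorphism $S'/\lb f'\rb \to S/\lb f\rb$ lifts to a graded automorphism of the polynomial rings, which forces the degrees and weights to agree.

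\emph{Step 1 (section ring).} For a well-formed quasismooth hypersurface $X=V_+(f)$ of dimension $n\ge 2$, we would first show that
$$\bigoplus_{m\in\Integ} H^0(X,\OSheaf_X(m)) \isom S/\lb f\rb.$$
Here $\OSheaf_X(m)$ is the reflexive divisorial sheaf. The argument is that the affine cone $C_X$ is normal of dimension $n+1\ge 3$ and singular only at the origin, so it is $S_2$ and sections extend across the vertex. Well-formedness ensures $\OSheaf_X(m)$ agrees with $\OSheaf_{\PPP}(m)|_X$ in codimension one, so the divisorial sheaves are the right ones.

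\emph{Step 2 (show $g$ preserves the polarization).} The key point is that $g^*\OSheaf_X(1)\isom \OSheaf_{X'}(1)$ as reflexive sheaves.
\begin{itemize}
\item \textbf{Case $n\ge 3$.} A Grothendieck--Lefschetz argument for quasismooth hypersurfaces gives $\Cl(X)\isom\Integ$, generated by $\OSheaf_X(1)$. Since $g$ preserves ampleness, it sends the ample generator to the ample generator.
\item \textbf{Case $n=2$.} Noether--Lefschetz shows $\Cl(X)$ can be larger, so we use the canonical class instead. By adjunction, $K_X=\OSheaf_X(\alpha)$ with $\alpha=d-\sum w_i\neq 0$ by hypothesis, and $g^*K_X=K_{X'}$. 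We then need $\OSheaf_X(1)$ to be the unique class $D$ with $\alpha D=K_X$, which requires $\Cl(X)$ to be torsion-free. We would try to get this from $H^1(X^{\rm reg},\OSheaf^*)$-type vanishing, or from the simple connectivity of the punctured cone $C_X\setminus\{0\}$ together with the local structure of the cyclic quotient points.
\end{itemize}

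\emph{Step 3 (graded isomorphism).} Steps 1 and 2 give a graded isomorphism $S'/\lb f'\rb\isom S/\lb f\rb$, possibly only on Veronese subrings in the $n=2$ case. Reflexivity then upgrades this to the full ring once the classes are matched.

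\emph{Step 4 (lift and compare).} Now use that neither hypersurface is a linear cone. Then $f$ lies in $\mgoth^2$ after a graded coordinate change, so the images of $x_0,\dots,x_{n+1}$ form a minimal homogeneous generating set of $S/\lb f\rb$. The multiset of their degrees is therefore an invariant, giving $\{w_i\}=\{w_i'\}$. The graded isomorphism lifts to a graded automorphism of $S$ carrying $\lb f'\rb$ to $\lb f\rb$; comparing the degrees of the defining relations gives $d=d'$. Such a lift is exactly an automorphism of $\PPP(w_0,\dots,w_{n+1})$ inducing $g$.

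The main obstacle is Step 2 for $n=2$: proving that $\Cl(X)$ has no torsion, so that $\OSheaf_X(1)$ is determined by $K_X$. This is exactly where the hypothesis $\alpha\neq 0$ enters; when $\alpha=0$ the canonical class gives no handle on the polarization.
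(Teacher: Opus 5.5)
The paper gives no proof of this statement; it simply quotes \cite[Theorem 2.1]{esser2024automorphisms}, so your first sentence is exactly what the paper does. Your reconstruction has a sound overall plan: identify $S/\lb f\rb$ with the section ring of $\OSheaf_X(1)$, show $g^*\OSheaf_X(1)\isom\OSheaf_{X'}(1)$, then use $f\in\mgoth^2$ to read off the weights from $\mgoth/\mgoth^2$ and lift to an automorphism of $S$. The $n\geq 3$ case is fine.

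Step 2 for $n=2$, however, has a genuine gap: torsion-freeness of $\Cl(X)$ is not the property you need. On $X'$ you have $K_{X'}=\alpha'[\OSheaf_{X'}(1)]$ with $\alpha'=d'-\sum w_i'$, and nothing yet tells you that $\alpha'=\alpha$. Pulling back $K_X=\alpha[\OSheaf_X(1)]$ only gives $\alpha\, g^*[\OSheaf_X(1)]=\alpha'[\OSheaf_{X'}(1)]$. In a torsion-free group this says the two classes are proportional, not equal; for instance they could be $3E$ and $2E$ with $\alpha=2$, $\alpha'=3$. Uniqueness of the solution of $\alpha D=K_X$ on $X$ does not compare $g^*\OSheaf_X(1)$ with $\OSheaf_{X'}(1)$.

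The missing ingredient is that $[\OSheaf_X(1)]$ is \emph{primitive} in $\Cl(X)$. Equivalently, $\Cl(C_X)\isom\Cl(X)/\Integ[\OSheaf_X(1)]$ (valid because $X$ is well-formed) must be torsion-free; this also gives torsion-freeness of $\Cl(X)$. Your second suggestion proves exactly this if you apply it to the cone rather than to $X$. By Milnor, the link of the isolated hypersurface singularity $0\in C_X\subset\Aff^4$ is simply connected, and $C_X\setminus\{0\}$ retracts onto it via the $\mathbb{R}_{>0}$-part of the weighted $\Comp^*$-action. An element of exact order $m$ in $\Cl(C_X)=\operatorname{Pic}(C_X\setminus\{0\})$ would give a connected cyclic cover of degree $m$, \'etale over $C_X\setminus\{0\}$, which forces $m=1$.

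Then $\Cl(X')\cap\Rat K_{X'}$ is infinite cyclic, and it is nonzero because $K_{X'}=g^*K_X\neq 0$ when $\alpha\neq 0$. Both $g^*[\OSheaf_X(1)]$ and $[\OSheaf_{X'}(1)]$ are primitive ample elements of this group, so they are equal, and $\alpha=\alpha'$ follows. With this correction your Step 3 needs no Veronese detour, and the rest of the argument goes through.
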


\section{Setup}\label{section:Setup}
\begin{assumption}
    From this point until the end of the article, we assume our base field is $\Comp$.
\end{assumption}

Let $\textbf{a} = (a_1, \dots, a_n), \textbf{b} = (b_1, \dots, b_n) \in (\Nat_{\geq 2})^n$. We define an equivalence relation by $\ba \sim \bb$ if the entries of $\textbf{b} = (b_1,\dots b_n)$ are a permutation of those of $\textbf{a} = (a_1,\dots, a_n)$. Given $\ba = (a_1, \dots, a_n)$, define 
\begin{equation}
    X_{\ba} = X_{a_1, \dots, a_n} = V(x_1^{a_1} + x_2^{a_2} + \dots +x_n^{a_n} + 1) \subset \Aff^n_\Comp
\end{equation}
and note that $X_\ba$ is nonsingular. Recall that we are interested in the following:

\begin{question}\label{mainQuestion}
    Let $\ba, \bb \in (\Nat_{\geq 2})^n$. If $X_\ba \isom X_\bb$, does it follow that $\ba \sim \bb$? 
\end{question}

As stated in the Introduction, we give an affirmative answer in the $n = 3$ case.

\medskip
We preserve the notation of \ref{setupNotation} until the end of the article. 
\begin{nothing}\label{setupNotation}

Given $\ba = (a_1,a_2,a_3) \in (\Nat_{\geq 2})^3$, let $L = \lcm(a_1,a_2,a_3)$ and let $w_i = L / a_i$ for each $i = 1,2,3$. The surface $\overline{X_\ba} = V_+(x_1^{a_1} + x_2^{a_2} + x_3^{a_3} + x_4^L) \subset \PPP(w_1,w_2,w_3,1)$ is quasismooth and well-formed; furthermore $X_\ba = X_{a_1,a_2,a_3} \isom D_+(x_4) \subset \overline{X_\ba}$. 
The complement $\overline{X_\ba} \setminus X_{\ba}$ is the curve $\overline{\partial_\ba} = V_+(x_1^{a_1} + x_2^{a_2} + x_3^{a_3} + x_4^L, x_4) \subseteq \PPP(w_1,w_2,w_3,1)$ which is isomorphic to the quasismooth (and hence smooth) curve $V_+(x_1^{a_1} + x_2^{a_2} + x_3^{a_3}) \subset \PPP(w_1,w_2,w_3)$. We will call $\overline{\partial_\ba}$ the \textit{boundary of $X_\ba$ in $\overline{X_\ba}$}. We let $\widetilde{X_\ba} \to \overline{X_\ba}$ denote the minimal resolution of singularities of $\overline{X_\ba}$ and note that $\widetilde{X_\ba}$ is a log smooth compactification of $X_\ba$. We let $\widetilde{\partial_\ba}$ denote the boundary divisor $\widetilde{X_\ba} \setminus X_\ba$ and we abbreviate its weighted graph $\Geul(X_\ba, \widetilde{X_\ba})$ by $\Geul(\ba)$ (as defined in \ref{weightedGraphDef}).       
\end{nothing}

\section{The Singularities of $\overline{X_\ba}$}\label{Section:Singularities}

\begin{nothing}\label{typeComputation}
    Consider $\overline{X_{\ba}} \subseteq \PPP(w_1, w_2,w_3,1)$ where $\overline{\partial_\ba} = V_+(x_4) \subset \overline{X_\ba}$. Note that the weight of $x_4$ is 1. Since $\overline{X_{\ba}}$ is quasismooth and well-formed, we have by Remark \ref{quasismoothRemark} that $\Sing \overline{X_{\ba}} = \Sing \PPP \cap \overline{X_{\ba}}$.  It can be checked that every singular point of $\overline{X_{\ba}}$ can be written in one of the following three forms:
    \begin{enumerate}[\rm(i)]
    \item $[1:y:0:0]$, $y \in \Comp^*$,
    \item $[1:0:z:0]$, $z \in \Comp^*$, 
    \item $[0:1:z:0]$, $z \in \Comp^*$. 
    \end{enumerate}
We want to compute the singularities of $\overline{X_\ba}$ and their types in two special cases:

    \begin{enumerate}[\rm(1)]
        \item $\ba = (a_1, a_2, a_3)$ where $\gcd(a_1 a_2,a_3) = 1$
        \item $\ba = (a_1,a_2, a_3) =  (ma_1',ma_2',ma_3')$ where $m \geq 1$ and $a_1',a_2',a_3'$ are pairwise relatively prime. 
    \end{enumerate}
    
We will compute the singular points and their types in Case (1) and will leave Case (2) to the reader. The arguments are the same.
\medskip

\noindent \textbf{Case (1).} Let $g_{i,j} = \gcd(a_i,a_j)$ where $1\leq i,j \leq 3$. We have $L = \frac{a_1 a_2 a_3}{g_{12}}$. We claim
    \begin{align}
        \label{g12} &\text{there are $g_{1,2}$ distinct points of type (i),}\\ 
        \label{g13} &\text{there is a unique point of type (ii),}\\
        \label{g23} &\text{there is a unique point of type (iii).}
    \end{align}
    We will prove \eqref{g12} and leave \eqref{g13} and \eqref{g23} to the reader. 
    Let $g = g_{1,2}$ and write $a_1 = ug, a_2 = vg$. Observe that every point of form (i) can be written as $[1: \zeta_{2a_2}^i: 0 :0]$ for some $i \in \{1,3, \dots, 2a_2-1\}$. We have $w_1 = L / a_1 = va_3$. The following are equivalent:
    
    \begin{itemize}
        \item $[1: (\zeta_{2a_2})^i: 0 :0] = [1:(\zeta_{2a_2})^{j}:0:0]$
        \item there exists some $\lambda \in \Comp^*$ such that $\lambda^{va_3} = 1$ and $\lambda^{ua_3} (\zeta_{2a_2})^i = (\zeta_{2a_2})^j$ 
        \item there exists $n$ such that $(\zeta_{va_3})^{nua_3} (\zeta_{2a_2})^{i-j} = 1$
        \item there exists $n$ such that $(\zeta_{v})^{nu} (\zeta_{2a_2})^{i-j} = 1$
        \item there exists $n$ such that $(\zeta_{2a_2v})^{2a_2un} (\zeta_{2a_2v})^{(i-j)v} = 1$
        \item there exists $n$ such that $(\zeta_{2a_2v})^{2a_2un+(i-j)v} = 1$
        \item there exists $n$ such that $a_2bv \mid 2a_2un + (i-j)v$
        \item there exists $n$ such that $a_2v \mid 2a_2un + \frac{(i-j)}{2}v$
        \item there exists $n$ such that $a_2v \mid 2vgun + \frac{(i-j)}{2}v$
        \item there exists $n$ such that $vg \mid 2gun + \frac{(i-j)}{2}$
        \item $g \mid \frac{i-j}{2}$ (because $\gcd(u,v) = 1$). 
    \end{itemize}
    This proves \eqref{g12}. Claims \eqref{g13} and \eqref{g23} follow by the same argument, where $\gcd(a_1, a_3) = 1$ is used in \eqref{g13} and $\gcd(a_2, a_3) = 1$ is used in \eqref{g23}.

    We will show that 
    \begin{align}
        \label{typeComputation1} \text{every point of form $[1:y:0:0]$ in $\overline{X_\ba}$ has type $\frac{1}{\gcd(w_1,w_2)}(w_3,1)$}, \\
        \label{typeComputation2} \text{every point of form $[1:0:z:0]$ in $\overline{X_\ba}$ has type $\frac{1}{\gcd(w_1,w_3)}(w_2,1)$}, \\
        \label{typeComputation3}
        \text{every point of form $[0:1:z:0]$ in $\overline{X_\ba}$ has type $\frac{1}{\gcd(w_2,w_3)}(w_1,1)$}.
    \end{align}

    It suffices to prove \eqref{typeComputation1}, the other two claims follow by symmetric arguments. 
    First, observe that the map $[x_1:x_2:x_3:x_4] \mapsto [\zeta_{a}x_1: x_2:x_3:x_4]$ is an automorphism of $\overline{X_\ba}$ that acts transitively on the points in $\overline{X_{\ba}}$ of form $[1:x_2:0:0]$, so every such point has the same type. Thus, it suffices to prove \eqref{typeComputation1} for the point $p = [1:\zeta_{2b}:0:0]$. The open set $D_+(x_1) \subset \overline{X_{\ba}} \subset \PPP(w_1,w_2,w_3,1)$ is the quotient $D_+(t) / \Gamma = V(1+x_2^{a_2}+x_3^{a_3} + x_4^L) / \Gamma$ where $D_+(t) \subset V_+(t^L + x_2^{a_2} + x_3^{a_3} + x_4^L) \subset \PPP(1,w_2,w_3,1)$ and $\Gamma = \Integ / w_1\Integ$ acts on $(x_2,x_3,x_4)\in V(1+x_2^{a_2}+ x_3^{a_3} + x_4^L)$ by $\epsilon \cdot (x_2,x_3,x_4) = (\epsilon^{w_2} x_2, \epsilon^{w_3} x_3, \epsilon x_4)$ where $\epsilon$ is a generator of $\Gamma$. The singular points of $D_+(x_1) \subset \overline{X_{\ba}}$ correspond to the orbits of the $\Gamma$-action on $D_+(t)$ with non-trivial stabilizer. In particular, the point $p =[1:\zeta_{2b}:0:0] \in D_+(x) = V(1+x_2^{a_2}+x_3^{a_3} + x_4^L) / \Gamma$ corresponds to the orbit of the action of $\Gamma$ that contains $\tilde{p} = (\zeta_{2b},0,0) \in V(1+x_2^{b}+x_3^{c} + x_4^L)$. The singular point of $D_+(x_1)$ corresponding to this $\Gamma$-orbit is analytically isomorphic to the quotient of the tangent space of $ V(1+x_2^{a_2}+x_3^{a_3} + x_4^L) \subset \PPP(1,w_2,w_3,1)$ at $(\zeta_{2b},0,0)$ by the induced action of the stabilizer of $(\zeta_{2b},0,0)$. 

    By \cite[Theorem 2]{prill1967local}  together with \cite[Lemme 1]{cartan1954}, the type of the singularity at $p \in D_+(x_1)$ is determined by the induced action of $\Gamma$ on the tangent space (or the cotangent space). The cotangent space $\mgoth_{\tilde{p}} / \mgoth^2_{\tilde{p}}$ is generated by the classes of $\bar{x}_3, \bar{x}_4$. The stabilizer of $p = (\zeta_{2b},0,0)$ under the $\Gamma$-action is the (cyclic) subgroup $\setspec{\epsilon^k}{\epsilon^{kw_2} = 1}$. This is a subgroup of $\Integ / w_1 \Integ$ of order $\gcd(w_1,w_2)$. Let $\mu = \epsilon^{w_1 / \gcd(w_1,w_2)}$ be a generator of this subgroup. Then $\mu$ acts on $\mgoth_{\tilde{p}} / \mgoth^2_{\tilde{p}} = (\bar{x}_3, \bar{x}_4)$ by $\mu \cdot  (\bar{x}_3, \bar{x}_4) = (\mu^{w_3} \bar{x}_3, \mu \bar{x}_4)$. This gives that action on the cotangent space $\mgoth_{\tilde p} / \mgoth^2_{\tilde p}$ is exactly the same as the action on the cotangent space at the origin of $\Aff^2$ that determines a $\frac{1}{\gcd(w_1,w_2)}(w_3,1)$ singularity, which proves \eqref{typeComputation1}. 

    We also remark that 
    \begin{equation}\label{yaxis}
        \text{the curve $V_+(x_4)$ in $\overline{X_{\ba}}$ corresponds to the $y$-axis of the $\frac{1}{\gcd(w_1,w_2)}(w_3,1)$ singularity.} 
    \end{equation}
\end{nothing}

This discussion proves Lemma \ref{otherTypes} below. The analysis of Case (2) yields Lemma \ref{mambmc}. These two lemmas are used throughout Section \ref{proofOfTheorem}. 

\begin{lemma}\label{otherTypes}
    Suppose $\ba = (a_1,a_2,a_3)$ is such that $a_1,a_2,a_3 \geq 2$ and $\gcd(a_1a_2,a_3) = 1$. Let $g = \gcd(a_1,a_2)$ and write $a_1 = ug$, $a_2 = vg$ where $\gcd(u,v) = 1$. Then
    \begin{enumerate}[\rm(a)]
    \item $\overline{X_\ba}$ has $g$ singular points of type $\frac{1}{a_3}(uvg,1)$.
    \item If $v \neq 1$, $\overline{X_\ba}$ has 1 singular point of type $\frac{1}{v}(\frac{L}{a_2},1)$. 
    \item If $u \neq 1$, $\overline{X_\ba}$ has 1 singular point of type $\frac{1}{u}(\frac{L}{a_1},1)$. 
    \end{enumerate}
\end{lemma}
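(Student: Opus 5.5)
The plan is to read everything off from the discussion in \ref{typeComputation}, Case (1). We only need to turn the weights $w_i$ and the gcds appearing in \eqref{typeComputation1}--\eqref{typeComputation3} into expressions in $u,v,g,a_3$. Since $\gcd(a_1a_2,a_3)=1$, we have $L=\lcm(a_1,a_2,a_3)=a_3\cdot\lcm(a_1,a_2)=uvg\,a_3$. Hence
\[
w_1=\tfrac{L}{a_1}=va_3,\qquad w_2=\tfrac{L}{a_2}=ua_3,\qquad w_3=\tfrac{L}{a_3}=uvg .
\]
Every singular point lies in $V_+(x_4)$ and is of one of the forms (i), (ii), (iii) listed in \ref{typeComputation}. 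The numbers of such points are given by \eqref{g12}, \eqref{g13} and \eqref{g23}.

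\textbf{Part (a).} The points of type (i) are $g=g_{1,2}$ in number by \eqref{g12}. By \eqref{typeComputation1} each has type $\frac{1}{\gcd(w_1,w_2)}(w_3,1)$. Here $\gcd(va_3,ua_3)=a_3$ because $\gcd(u,v)=1$, so the type is $\frac{1}{a_3}(uvg,1)$. Since $a_3\ge 2$, these points really are singular.

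\textbf{Part (b).} There is exactly one point of type (ii), by \eqref{g13}. By \eqref{typeComputation2} it has type $\frac{1}{\gcd(w_1,w_3)}(w_2,1)$. We compute
\[
\gcd(w_1,w_3)=\gcd(va_3,uvg)=v\cdot\gcd(a_3,ug)=v,
\]
using $\gcd(a_3,a_1)=1$. So the point has type $\frac1v(ua_3,1)=\frac1v(\frac{L}{a_2},1)$.

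\textbf{Part (c).} There is exactly one point of type (iii), by \eqref{g23}, and by \eqref{typeComputation3} it has type $\frac{1}{\gcd(w_2,w_3)}(w_1,1)$. In the same way, $\gcd(ua_3,uvg)=u\gcd(a_3,vg)=u$, so the type is $\frac1u(va_3,1)=\frac1u(\frac{L}{a_1},1)$.

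A type $\frac{1}{n}(m,1)$ with $n=1$ is a smooth point. So when $v=1$ (resp.\ $u=1$) the point of type (ii) (resp.\ (iii)) is nonsingular, and otherwise it is a genuine singularity. This explains the hypotheses $v\neq1$ and $u\neq1$ in (b) and (c), and shows that the list in (a)--(c) is exactly $\Sing\overline{X_\ba}$.

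The only step that needs care is the existence and count of points of forms (ii) and (iii), which \ref{typeComputation} left to the reader. For (ii), the points $[1:0:z:0]$ with $z^{a_3}=-1$ are identified under the weight action by $\lambda$ with $\lambda^{w_1}=\lambda^{va_3}=1$. This action scales $z$ by $\lambda^{w_3}=\lambda^{uvg}$, and since $\gcd(uvg,a_3)=1$ these scalings act transitively on the $a_3$ roots. So there is a single point, and the same argument gives a single point for (iii).
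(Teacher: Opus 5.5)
Your proposal is correct and takes essentially the same route as the paper: the paper's proof of this lemma is the Case (1) discussion in \ref{typeComputation}, and you substitute $L = uvga_3$, $w_1 = va_3$, $w_2 = ua_3$, $w_3 = uvg$ into \eqref{typeComputation1}--\eqref{typeComputation3} and \eqref{g12}--\eqref{g23}, and all your gcd computations are right. Your transitivity argument (restricting to $\lambda \in \mu_{a_3}$, where $\lambda \mapsto \lambda^{uvg}$ is onto because $\gcd(uvg,a_3)=1$) correctly fills in the counts for forms (ii) and (iii) that the paper leaves to the reader.
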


\begin{lemma}\label{mambmc}
    Consider the tuple $(ma_1,ma_2,ma_3)$ where $a_1,a_2,a_3 \geq 1$ are pairwise relatively prime and $m \geq 1$. Let $n$ denote how many among $a_1,a_2,a_3$ equal 1. Then 
    \begin{enumerate}[\rm(a)]
        \item $\overline{X_{ma_1,ma_2,ma_3}} \subseteq \PPP(a_2a_3,a_1a_3,a_1a_2,1)$ has exactly $(3-n)m$ singular points.  
        \item If $a_3 \neq 1$, there are $m$ singular points of type $\frac{1}{a_3}(a_1a_2,1)$.
        \item If $a_2 \neq 1$, there are $m$ singular points of type $\frac{1}{a_2}(a_1a_3,1)$.
        \item If $a_1 \neq 1$, there are $m$ singular points of type  $\frac{1}{a_1}(a_2a_3,1)$. 
    \end{enumerate}
\end{lemma}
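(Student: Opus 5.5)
The plan is to follow the template of Case (1) in \ref{typeComputation}, now for $\ba = (ma_1,ma_2,ma_3)$ with $a_1,a_2,a_3$ pairwise coprime.

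\textbf{Weights.} Since the $a_i$ are pairwise coprime, $L = \lcm(ma_1,ma_2,ma_3) = ma_1a_2a_3$. Hence $w_1 = a_2a_3$, $w_2 = a_1a_3$, $w_3 = a_1a_2$, which explains the ambient space $\PPP(a_2a_3,a_1a_3,a_1a_2,1)$. Moreover
\[
\gcd(w_1,w_2)=a_3,\qquad \gcd(w_1,w_3)=a_2,\qquad \gcd(w_2,w_3)=a_1 .
\]
Because $\overline{X_\ba}$ is quasismooth and well-formed, Remark \ref{quasismoothRemark}(b) gives $\Sing(\overline{X_\ba}) = \overline{X_\ba}\cap \Sing(\PPP)$.

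\textbf{Locating the singular points.} The singular points of $\PPP$ are the points whose nonzero coordinates have weights sharing a common factor greater than $1$. Since $x_4$ has weight $1$, such points satisfy $x_4=0$. Since the pairwise gcds above have trivial total gcd, at most two of $x_1,x_2,x_3$ are nonzero.

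A point with only one nonzero coordinate, such as $[1:0:0:0]$, does not lie on $\overline{X_\ba}$, because the equation would force that coordinate to vanish. So every singular point has exactly two nonzero coordinates among $x_1,x_2,x_3$. These are the points of forms (i), (ii), (iii) of \ref{typeComputation}.

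A point of form (i), $[1:y:0:0]$, is singular exactly when $\gcd(w_1,w_2)=a_3>1$. It satisfies $y^{ma_2}=-1$.

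\textbf{Counting points of form (i).} Rerun the chain of equivalences from the proof of \eqref{g12}. Two points $[1:y:0:0]$ and $[1:y':0:0]$ coincide iff there is $\lambda$ with $\lambda^{w_1}=1$ and $y'=\lambda^{w_2}y$. The group $\{\lambda^{w_2} : \lambda^{a_2a_3}=1\}$ is the group of $a_2a_3/\gcd(a_2a_3,a_1a_3) = a_2$-th roots of unity, using $\gcd(a_1,a_2)=1$. This group acts freely on the $ma_2$ solutions of $y^{ma_2}=-1$. Hence there are exactly $ma_2/a_2 = m$ distinct points of form (i).

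Symmetrically, there are $m$ points of form (ii) when $a_2\ne 1$, and $m$ points of form (iii) when $a_1\ne 1$. Summing over the $3-n$ values $a_i\neq 1$ proves (a).

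\textbf{Types.} The type computation \eqref{typeComputation1}--\eqref{typeComputation3} did not use $\gcd(a_1a_2,a_3)=1$. It only used:
\begin{itemize}
\item the description of $D_+(x_1)$ as the quotient by $\Integ/w_1\Integ$;
\item that the stabilizer has order $\gcd(w_1,w_2)$;
\item that the stabilizer acts on the cotangent space spanned by $\bar x_3,\bar x_4$.
\end{itemize}
So it applies verbatim here and gives type $\frac{1}{a_3}(w_3,1)=\frac{1}{a_3}(a_1a_2,1)$ for points of form (i). Analogously, points of form (ii) have type $\frac{1}{a_2}(a_1a_3,1)$ and points of form (iii) have type $\frac{1}{a_1}(a_2a_3,1)$. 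This gives (b), (c) and (d).

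\textbf{Main obstacle.} The delicate step is the orbit count. One must check that the stabilizer-induced identifications are exactly multiplication by the $a_2$-th roots of unity, so no extra coincidences arise. This is where pairwise coprimality is used.
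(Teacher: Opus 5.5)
Your proposal is correct and follows the route the paper intends. The paper proves Case (1) in \ref{typeComputation} and leaves Case (2), which yields this lemma, to the reader as ``the same'' argument; you carry it out with $L = ma_1a_2a_3$, weights $w_1=a_2a_3$, $w_2=a_1a_3$, $w_3=a_1a_2$ and pairwise gcds $a_3, a_2, a_1$. Your orbit count, in which $\{\lambda^{w_2} : \lambda^{w_1}=1\}$ is the group of $a_2$-th roots of unity acting freely on the $ma_2$ roots of $y^{ma_2}=-1$, is a slightly cleaner packaging of the paper's chain of equivalences, and the stabilizer/cotangent-space type computation does transfer verbatim.
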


\section{Parameter-Closed Subsets}\label{paramClosed}
\begin{definition}
    Let $k \in \Nat$, and let $S \subseteq T \subseteq  (\Nat_{\geq 2})^k$. Let $\textbf{a} = (a_1, \dots, a_k), \textbf{b} = (b_1, \dots,b_k) \in T$. We say that \textit{$S$ is parameter-closed in $T$} if the following condition holds:
    \begin{equation}
        \text{if $\ba \in S$ and $X_\bb \isom X_\ba$ then $\bb \in S$.}
    \end{equation}
    In the special case where $T = (\Nat_{\geq 2})^k$, we will simply say \textit{$S$ is parameter-closed}.
\end{definition}

\begin{nothing}\label{parameterClosedRemarks}
    Fix $k \in \Nat$ and consider the following family of subsets of $(\Nat_{\geq 2})^k$ 
    \begin{equation} 
        \Cscr = \{\text{parameter-closed subsets of $(\Nat_{\geq 2})^k$}\}.
    \end{equation} 
    Observe that $\emptyset, (\Nat_{\geq 2})^k \in \Cscr$. We show that $\Cscr$ is closed both under arbitrary unions, and arbitrary intersections. Indeed, let $\{C_i\}_{i \in I}$ be a collection of parameter-closed sets. Suppose $\ba \in \cup_{i \in I} C_i$ and $X_\bb \isom X_\ba$. Then $\ba \in C_j$ for some $j \in I$ and since $C_j$ is parameter closed, $\bb \in C_j \subseteq \cup_{i \in I} C_i$. So $\cup_{i \in I} C_i$ is parameter-closed and hence that $\Cscr$ is closed under arbitrary unions. Similarly, suppose $X_\bb \isom X_\ba$ where $\ba \in \cap_{i \in I} C_i$. Since each $C_i$ is parameter-closed, $\bb \in C_i$ for all $i \in I$, so $\bb \in \cap_{i \in I} C_i$. This shows that $\Cscr$ is closed under arbitrary intersections. Thus, $\Cscr$ defines an Alexandrov topology on $(\Nat_{\geq 2})^k$. Note also that if $C \subseteq D$ and $C$ is parameter-closed in $D$, then $D \setminus C$ is parameter-closed in $D$. It is also easy to see that if $S$ is parameter-closed in $T$ and $T$ is parameter-closed in $U$, then $S$ is parameter-closed in $U$.  
\end{nothing}

\begin{example}\label{paramClosedExamples}
    Recall that a $\bk$-variety $X$ is \textit{rigid} if the only $\G_a$-action on $X$ is the trivial one. Define  
    $$S_{\text{\rm Dan}} =  \setspec{(a_1,a_2,a_3) \in (\Nat_{\geq 2})^3}{\text{$X_{a_1,a_2,a_3}$ is non-rigid}}.$$
    It is easy to see that $S_{\text{\rm Dan}}$
    is parameter-closed in $(\Nat_{\geq 2})^3$. Furthermore, we claim 
    $$
    S_{\text{\rm Dan}} = \setspec{(a_1,a_2,a_3) \in (\Nat_{\geq 2})^3}{a_i = a_j = 2 \text{ for some $i \neq j$}}.$$ 
    For $(\supseteq)$, it is easy to check that $X_{2,2,a_3}$ is isomorphic to a Danielewski surface and hence is not rigid. The inclusion $(\subseteq)$ follows by combining \cite[Lemma 4]{Kali-Zaid_2000}, with \cite[Proposition 10.5]{daigle2023rigidity}. We also note that  
    \begin{align*}
        S'_{\text{\rm Dan}} &= \setspec{(a_1,a_2,a_3,a_4)\in (\Nat_{\geq 2})^4}{\text{$X_{a_1,a_2,a_3,a_4}$ is non-rigid}} \\
        &=  \setspec{(a_1,a_2,a_3,a_4) \in (\Nat_{\geq 2})^4}{a_i = a_j = 2 \text{ for some $i \neq j$}}.
    \end{align*}
    is parameter-closed in $(\Nat_{\geq 2})^4$ where the second equality follows by the same argument, appealing to \cite[Main Theorem]{chitayat2025rigid} instead of \cite[Lemma 4]{Kali-Zaid_2000}.  
\end{example}

\section{Proof of the Main Theorem}\label{proofOfTheorem}

\begin{notation}
    Let $S \subseteq (\Nat_{\geq 2})^3$. We say that \textit{``$S$ is good"} if $\ba, \bb \in S$ and $X_\ba \isom X_\bb$ imply that $\ba \sim \bb$.
\end{notation}

\begin{lemma}\label{reduction}
    Let $S = \sqcup_{i \in I} S_i$ be a partition of $S$. Suppose that 
    \begin{enumerate}[\rm(i)]
        \item each $S_i$ is parameter-closed in $S$
        \item $S_i$ is good for all $i \in I$. 
    \end{enumerate}
    Then $S$ is good.  
\end{lemma}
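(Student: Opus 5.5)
This is a direct unwinding of the definitions. Take $\ba, \bb \in S$ with $X_\ba \isom X_\bb$; we must show $\ba \sim \bb$. Since $\{S_i\}_{i \in I}$ partitions $S$, there is a unique $i \in I$ with $\ba \in S_i$.

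The first step is to place $\bb$ in the same piece. By hypothesis (i), $S_i$ is parameter-closed in $S$. Here $\bb \in S$ and $X_\bb \isom X_\ba$ with $\ba \in S_i$, so the definition of parameter-closedness (with $T = S$) gives $\bb \in S_i$. At this point both $\ba$ and $\bb$ lie in $S_i$ and $X_\ba \isom X_\bb$.

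The second step is to apply hypothesis (ii). Since $S_i$ is good, the definition of good immediately yields $\ba \sim \bb$. As $\ba, \bb$ were arbitrary, $S$ is good.

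There is no real obstacle; the only point needing care is that parameter-closedness is relative to $S$. That is exactly why we use $\bb \in S$: it lets us invoke the definition with $T = S$. We do not need the symmetry of isomorphism, since the definition is stated in the form ``$\ba \in S_i$ and $X_\bb \isom X_\ba$''. Disjointness of the partition is also not needed; only the covering property is used.
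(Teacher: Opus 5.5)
Your proof is correct and is the same argument as the paper's: put $\ba$ in some piece $S_i$, use that $S_i$ is parameter-closed in $S$ (with $\bb \in S$) to get $\bb \in S_i$, and then conclude $\ba \sim \bb$ because $S_i$ is good. Your observation that only the covering property of the partition is used, not disjointness, is also accurate.
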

\begin{proof}
    Let $\ba, \bb \in S$ and assume $X_\bb \isom X_\ba$. Since the sets $S_i$ partition $S$, $\ba \in S_i$ for a unique $S_i$. Since the $S_i$ are parameter-closed, $\bb \in S_i$ and since $S_i$ is good $\bb \sim \ba$.
\end{proof}

\begin{lemma}\label{boundaryGenus}
    Let $\ba, \bb \in (\Nat_{\geq 2})^3$. If $X_\ba \isom X_\bb$, then the boundary curves $\overline{\partial_\ba} \subset \overline{X_\ba}$ and $\overline{\partial_\bb} \subset \overline{X_\bb}$ have the same geometric genus. 
\end{lemma}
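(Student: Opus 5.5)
The plan is to realize the boundary curve, up to birational equivalence, as a component of a log smooth compactification of $X_\ba$, and then to invoke \ref{weightedGraphDef}(b).

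The first step is to look at the minimal resolution $\pi:\widetilde{X_\ba}\to\overline{X_\ba}$, which by \ref{setupNotation} is a log smooth compactification of $X_\ba$. Its boundary $\widetilde{\partial_\ba}$ consists of two kinds of curves:
\begin{itemize}
\item the strict transform $\widetilde{C}$ of $\overline{\partial_\ba}$;
\item the exceptional curves of $\pi$.
\end{itemize}
All singular points of $\overline{X_\ba}$ lie on $V_+(x_4)=\overline{\partial_\ba}$, by the description in \ref{typeComputation}. They are cyclic quotient singularities by Remark \ref{quasismoothRemark}(c), hence rational. By \ref{rationalSingularities}, every exceptional curve is therefore isomorphic to $\PPP^1$. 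Moreover, $\overline{\partial_\ba}$ is smooth, so $\widetilde{C}\isom\overline{\partial_\ba}$, and both have the same genus $g_\ba$.

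It follows that the multiset of nonzero genera of the components of $\widetilde{\partial_\ba}$ is empty if $g_\ba=0$, and is $\{g_\ba\}$ if $g_\ba>0$. The same description holds for $\bb$.

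Now suppose $\varphi:X_\ba\to X_\bb$ is an isomorphism. Then $\widetilde{X_\bb}$, with the embedding $X_\ba\hookrightarrow \widetilde{X_\bb}$ given by $\varphi$, is a second log smooth compactification of $X_\ba$. By \ref{weightedGraphDef}(b), the multisets of nonzero genera of boundary curves in $\widetilde{X_\ba}\setminus X_\ba$ and $\widetilde{X_\bb}\setminus X_\bb$ coincide. By the previous step these multisets are $\{g_\ba\}$ or $\emptyset$, and $\{g_\bb\}$ or $\emptyset$, respectively. Hence either $g_\ba=g_\bb>0$, or both genera are zero. In either case $g_\ba=g_\bb$.

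The main point needing care is to confirm that the minimal resolution boundary is genuinely SNC, so that it qualifies as a log smooth compactification; the paper asserts this in \ref{setupNotation}. Granting it, the only geometric input is that the exceptional curves are rational, and this is supplied by \ref{rationalSingularities}.

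If one wants to avoid \ref{weightedGraphDef}(b), there is an alternative route. One can extend $\varphi$ to a birational map $\widetilde{X_\ba}\dashrightarrow\widetilde{X_\bb}$ and factor it into blow-ups and blow-downs supported on the boundary, as in \ref{weightedGraphDef}(a). A blow-up only adds rational curves, and a blow-down only contracts rational curves. Consequently the unique irrational boundary component, if there is one, persists throughout the factorization up to isomorphism, which again gives $g_\ba=g_\bb$.
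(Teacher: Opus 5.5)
Your proposal is correct and is essentially the paper's proof: the minimal resolution only adds rational exceptional curves over the cyclic quotient singularities lying on the smooth curve $\overline{\partial_\ba}$, so the only possible irrational boundary component is the strict transform of $\overline{\partial_\ba}$, and \ref{weightedGraphDef}(b) finishes the argument. Your alternative route through \ref{weightedGraphDef}(a) just unpacks the standard proof of (b), so it is also valid but adds no new content.
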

\begin{proof}
    The log smooth compactifications  $\widetilde{X_\ba}$ and $\widetilde{X_\bb}$ are obtained by resolving cyclic quotient singularities along the smooth curves $\overline{\partial_\ba}$ and $\overline{\partial_\bb}$. In particular the boundaries $\widetilde{\partial_\ba}$ and $\widetilde{\partial_\bb}$ have at most one curve of non-zero genus (the strict transform of $\overline{\partial_\ba}$) in their support. The result follows from \ref{weightedGraphDef} (b). 
\end{proof}

\begin{lemma}\label{rational}
    Let $\ba = (a_1,a_2,a_3) \in (\Nat_{\geq 2})^3$. 
    \begin{enumerate}[\rm(a)]
        \item The following are equivalent:
    
    \begin{enumerate}[\rm(i)]
        \item $X_\ba$ is rational,
        \item $\frac{1}{a_1} + \frac{1}{a_2} + \frac{1}{a_3} + \frac{1}{L} > 1$,
        \item the amplitude $\alpha_\ba < 0$,
        \item $\ba \in \{(2,2,a_3), (2,3,3), (2,3,4), (2,3,5), (2,3,6),(2,4,4), (3,3,3)\}$.
    \end{enumerate}
    \item The set $S_{\text{\rm rat}} = \setspec{\ba}{\alpha_\ba < 0}$ is parameter-closed. 
    \item The subset $S_{\text{\rm Dan}}$ is good. 
    \item The subset $S_{\text{\rm rat}}$ is good.
    
    \end{enumerate}
    
\end{lemma}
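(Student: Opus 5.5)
Part (a) will be proved around the amplitude of $\overline{X_\ba} \subset \PPP(w_1,w_2,w_3,1)$. Its degree is $L$, so $\alpha_\ba = L - w_1 - w_2 - w_3 - 1$. Dividing by $L$ gives $\alpha_\ba < 0$ if and only if $\frac{1}{a_1}+\frac{1}{a_2}+\frac{1}{a_3}+\frac{1}{L} > 1$, which is (ii)$\Leftrightarrow$(iii).

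For (ii)$\Leftrightarrow$(iv) I will do a finite enumeration with $a_1\le a_2\le a_3$. If $a_1\ge 3$, only $(3,3,3)$ works, since $(3,3,4)$ already gives $\frac{11}{12}+\frac1{12}=1$. If $a_1=2$ and $a_2=2$, every $a_3$ works. If $a_1=2$ and $a_2=3$, the condition $\frac16+\frac1{a_3}+\frac1L>\frac12$ holds exactly for $a_3\le 6$. If $a_1=2$ and $a_2=4$, only $a_3=4$ works. If $a_1=2$ and $a_2\ge5$, nothing works.

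For (i)$\Leftrightarrow$(iii) I will use Remark \ref{quasismoothRemark}. Since $X_\ba$ is rational if and only if $\overline{X_\ba}$ is, and $\overline{X_\ba}$ has only cyclic quotient (hence rational) singularities, rationality of $\overline{X_\ba}$ can be tested on the minimal resolution $\widetilde{X_\ba}$ via Castelnuovo's criterion.
\begin{itemize}
\item If $\alpha_\ba<0$, then $-K$ is ample, so $\overline{X_\ba}$ is a log del Pezzo surface with quotient singularities, and such surfaces are rational. Alternatively, for the finite list in (iv) one can write down rational parametrizations directly.
\item If $\alpha_\ba\ge 0$, then $p_g(\overline{X_\ba}) = \dim(S/\lb f\rb)_{\alpha} \ge 1$, because $(S)_\alpha$ contains $x_4^{\alpha}$ and $\alpha<L$. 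Since the singularities are rational, $p_g$ of the resolution equals $p_g(\overline{X_\ba})\ge1$, so $X_\ba$ is not rational.
\end{itemize}
This case split is the step I consider most delicate, and it is why the paper's Proposition \ref{S0char} refers to rationality.

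Part (b) follows immediately from (a), since rationality is an isomorphism invariant of $X_\ba$.

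For (c), let $\ba=(2,2,a_3)$ and $\bb=(2,2,b_3)$ up to permutation. By Lemma \ref{boundaryGenus} it suffices to find an invariant separating different $a_3$. I would use $\Cl(X_\ba)$ or $H_1$ at infinity. The cleanest route is to rewrite $x_1^2+x_2^2 = uv$, so $X_\ba \isom V(uv + x_3^{a_3}+1)$, a Danielewski-type surface $uv = p(x_3)$ with $\deg p = a_3$ and $p$ having distinct roots. The classical results on such surfaces give $\Cl(X_\ba)\isom \Integ^{a_3-1}$: $\Cl$ is free of rank $\deg p - 1$, generated by the components of $u=0$. Hence $a_3=b_3$. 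I expect this to be routine.

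For (d), $S_{\rm rat}\setminus S_{\rm Dan}$ is the finite list $\{(2,3,3),(2,3,4),(2,3,5),(2,3,6),(2,4,4),(3,3,3)\}$. Because $S_{\rm Dan}$ is parameter-closed, $S_{\rm rat}\setminus S_{\rm Dan}$ is parameter-closed, so by Lemma \ref{reduction} together with (c) it suffices to separate these six surfaces. I will compute two invariants for each:
\begin{itemize}
\item the genus of the boundary curve $V_+(x_1^{a_1}+x_2^{a_2}+x_3^{a_3})\subset\PPP(w_1,w_2,w_3)$, which is an invariant by Lemma \ref{boundaryGenus};
\item the rank of $\Cl(X_\ba)$, obtained as $\rank \Cl(\overline{X_\ba}) - 1$ by computing the Picard rank of $\widetilde{X_\ba}$ from $K^2$ and the resolution graphs of the singularities given by Lemmas \ref{otherTypes} and \ref{mambmc}. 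Equivalently, it can be read off the equivalence class of $\Geul(\ba)$.
\end{itemize}
The main obstacle is checking that this pair of invariants really distinguishes all six cases. Should two of them coincide, I would fall back on the full weighted graph $\Geul(\ba)$, whose minimal form is unique by Lemma \ref{minimalGraph3branches} when the boundary graph is star-shaped with at least three branches.
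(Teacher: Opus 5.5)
Your proposal is correct and has the same skeleton as the paper's proof. In (a) you work with the amplitude, and $p_g(\overline{X_\ba})\geq 1$ (from $x_4^{\alpha}$, exactly the paper's ``because $w_4=1$'') rules out rationality when $\alpha_\ba\geq 0$. Part (b) is immediate, and (c), (d) are settled by $\rk\Cl(X_\ba)$ together with the boundary genus.

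The differences lie in where the inputs come from:
\begin{itemize}
\item For $\alpha_\ba<0$ the paper cites \cite[Corollary 4.9]{chitayat2025rationality}. You instead use that a projective surface with quotient singularities and ample anticanonical class (here $\omega_{\overline{X_\ba}}\cong\OSheaf(\alpha_\ba)$) is rational. Both work.
\item For class groups the paper relies on Storch's computation of $\Cl$ of the affine cone (Table \ref{invariants}). You get $\Cl(X_{2,2,a_3})\cong\Integ^{a_3-1}$ directly from $uv=-(x_3^{a_3}+1)$ and the localization sequence for $\{u\neq 0\}\cong\mathbb{G}_m\times\Aff^1$. This is more elementary and uniform in $a_3$.
\item For the six sporadic tuples you propose Noether's formula $\rho(\widetilde{X_\ba})=10-K_{\widetilde{X_\ba}}^2$ together with the singularity data of Lemmas \ref{otherTypes} and \ref{mambmc}. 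This is self-contained but more computational.
\end{itemize}

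You leave that last computation undone, and it is what resolves your ``main obstacle.'' For $(2,3,3),(2,3,4),(2,3,5),(2,3,6),(2,4,4),(3,3,3)$ one gets
\[
(\rk\Cl(X_\ba),\,p_g(\overline{\partial_\ba}))=(4,0),(6,0),(8,0),(8,1),(7,1),(6,1).
\]
For example, for $(2,3,5)$: $K_{\overline{X_\ba}}^2=2/15$, and the singular points are of types $\frac15(1,1)$, $\frac13(1,1)$, $\frac12(1,1)$. Hence $K_{\widetilde{X_\ba}}^2=-2$, $\rho=12$, and the rank is $12-3-1=8$.

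The rank alone does not separate $(2,3,4)$ from $(3,3,3)$, nor $(2,3,5)$ from $(2,3,6)$, but the genus does. So no fallback to $\Geul(\ba)$ is needed. Finally, your appeal to Lemma \ref{boundaryGenus} in (c) is superfluous, since every $(2,2,a_3)$ has rational boundary.
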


\begin{proof}
    We have $X_\ba$ is rational if and only if  $\overline{X_\ba} = V_+(x_1^{a_1} + x_2^{a_2} + x_3^{a_3} + x_4^L) \subset \PPP(w_1, w_2, w_3, 1)$ is rational if and only if $\frac{1}{a_1} + \frac{1}{a_2} + \frac{1}{a_3} + \frac{1}{L} > 1$ where $(\Leftarrow)$ is by \cite[Corollary 4.9]{chitayat2025rationality}  and $(\Rightarrow)$ is because if the amplitude $\alpha_\ba \geq 0$, then by Remark \ref{quasismoothRemark} (e),  we have $p_g(X) = \dim_\Comp B_\alpha \geq 1$, because $w_4 = 1$. This proves ${\rm(i)} \iff {\rm (ii)}$. The equivalences ${\rm (ii)} \iff {\rm (iii)}$ and ${\rm (ii)} \iff {\rm (iv)}$ are elementary number theory and are left to the reader. This proves (a). For (b), the equality of sets is by (a) and parameter-closedness is obvious. 

    We prove (c). Let $\ba \in S_{\text{\rm Dan}}$, and assume $X_\bb \isom X_\ba$. Then $\bb \in S_{\text{\rm Dan}}$ by Example \ref{paramClosedExamples}, so we may assume $a_1 =a_2 = b_1 =b_2 = 2$. It suffices to prove that if $X_{2,2,a_3} \isom X_{2,2,b_3}$, then $a_3 = b_3$. By Table \ref{invariants} in \ref{onlyTau}, $\rank(\Cl(X_{2,2,c})) = c-1$ for all $c \in \Nat_{\geq 2}$, so $a_3 = b_3$ and $\ba \sim \bb$. 
    For (d), suppose $\ba \in S_{\text{\rm rat}}$ and assume $X_\bb \isom X_\ba$. Since $S_{\text{\rm rat}} \setminus S_{\text{\rm Dan}}$ is parameter-closed by (b) together with Example \ref{paramClosedExamples}, $\bb \in S_{\text{\rm rat}} \setminus S_{\text{\rm Dan}}$. By the invariants computed in Table \ref{invariants} in \ref{onlyTau}, no distinct elements of $S_{\text{\rm rat}} \setminus S_{\text{\rm Dan}}$ are isomorphic, proving (d). 
\end{proof}

\begin{nothing}\label{onlyTau}
    Let $\ba \in (\Nat_{\geq 2})^3$ and recall that $C_{\overline{X_\ba}} = \Spec(\Comp[x_1,x_2,x_3,x_4] / \lb x^{a_1} + x_2^{a_2} + x_3^{a_3} + x_4^{L} \rb)$ denotes the affine cone over $\overline{X_\ba}$. Since there is no 4-tuple $(m_1, m_2, m_3, m_4) \in (\Nat_{\geq 1})^4$ that satisfies $\sum_{i = 1}^4 \frac{m_i}{a_i}=1$, \cite[Satz 2.1]{Storch} implies that $\Cl(C_{\overline{X_\ba}}) = \Integ^{k}$ and $\Cl(\overline{X_\ba}) \isom \Integ^{k+1}$ where $k = \tau(a_1, a_2, a_3, L)$ and $\tau : (\Nat_{\geq 1})^4 \to \Nat$ is defined at the bottom of \cite[p.191]{Storch}. We have that $V_+(x_4)$ is an irreducible closed subset of $\overline{X_\ba}$ of codimension 1, so  by \cite[Proposition 6.5]{Hartshorne},  $\Cl(X_\ba) \isom \Integ^{k+1} / \lb V_+(x_4) \rb$ where $\lb V_+(x_4) \rb$ is the subgroup of $\Cl(\overline{X_\ba}) \isom \Integ^{k+1}$ generated by $V_+(x_4)$. In particular, the rank of $\Cl(X_\ba)$ as an abelian group is $k = \tau(a_1, a_2, a_3, L)$. We obtain the following table of invariants of $X_\ba$ for each $\ba \in S_{\text{\rm rat}}$. 

\begin{table}[h]
\centering

\begin{tabular}{|c|c|c|c|c|}
\hline
$\ba = (a_1,a_2,a_3)$ & $\Cl(C_{\overline{X_\ba}})$ & $\Cl(\overline{X_\ba})$  
& $\rank \Cl(X_{\ba})$ 
& $p_g(\overline{\partial_\ba})$ \\
\hline
$(2,2,a_3)$ & $\Integ^{a_3-1}$ & $\Integ^{a_3}$ & $a_3-1$ & $0$ \\ \hline
$(2,3,3)$ & $\Integ^4$ & $\Integ^5$ & $4$ & $0$ \\ \hline
$(2,3,4)$ & $\Integ^6$ & $\Integ^7$ & $6$ & $0$ \\ \hline
$(2,3,5)$ & $\Integ^8$ & $\Integ^9$ & $8$ & $0$ \\ \hline
$(2,3,6)$ & $\Integ^8$ & $\Integ^9$ & $8$ & $1$\\ \hline
$(2,4,4)$ & $\Integ^7$ & $\Integ^8$ & $7$ & $1$ \\ \hline
$(3,3,3)$ & $\Integ^6$ & $\Integ^7$ & $6$ & $1$ \\ \hline 
\end{tabular}
\caption{Invariants of $X_{a_1,a_2,a_3}$ where $(a_1,a_2,a_3) \in S_{\text{\rm rat}}$}
\label{invariants}
\end{table}
\vspace{-20pt}

\end{nothing}
\begin{lemma}\label{K3}
     Let $\ba = (a_1, a_2,a_3) \in (\Nat_{\geq 2})^3$,  $L = \lcm(a_1,a_2,a_3)$ and let $w_i = L / a_i$ for each $i \in \{1,2,3\}$.
    The following are equivalent:
    \begin{enumerate}[\rm(a)]
        \item $L - w_1  - w_2 - w_3 - 1 = 0$,
        \item $\frac{1}{a_1} + \frac{1}{a_2} + \frac{1}{a_3} + \frac{1}{L} = 1$,
        \item up to a permutation of entries
        $$(a_1,a_2,a_3) \in \{(2,3,7), (2,3,8), (2,3,9), (2,3,12), (2,4,5), (2,4,6), (2,4,8), (2,5,5), (2,6,6),(3,3,6), (4,4,4)\}.$$
    \end{enumerate} 
    Furthermore, the rank of the divisor class group of $X_{\ba}$ is given in Table \ref{K3invariants} below. Furthermore, no two distinct surfaces $X_{\ba}$ from the list that satisfy $p_g(\overline{\partial_\ba}) = 0$ are isomorphic.  
\end{lemma}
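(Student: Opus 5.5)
The plan has three parts: two elementary equivalences, a Storch computation for the class group ranks, and a comparison of invariants.

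\emph{(a)$\iff$(b).} Divide $L - w_1 - w_2 - w_3 - 1 = 0$ by $L$, using $w_i = L/a_i$. This gives $\frac{1}{a_1}+\frac{1}{a_2}+\frac{1}{a_3}+\frac{1}{L}=1$, and the step is reversible.

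\emph{(b)$\iff$(c).} This is a finite Diophantine analysis. Order the entries so that $a_1 \le a_2 \le a_3$.
\begin{enumerate}[\rm(i)]
\item If $a_1 \ge 5$, then $\frac{3}{a_1}+\frac{1}{L}\le \frac{3}{5}+\frac{1}{5}<1$, so there are no solutions. Hence $a_1\in\{2,3,4\}$.
\item If $a_1 = 4$, then $a_2\ge 4$. The case $a_2 = 5$ already forces the sum of the three reciprocals to be below $1-\frac{1}{L}$, since $L \ge 20$. So $a_2 = 4$, and then $\frac{1}{a_3}+\frac{1}{L}=\frac12$, which forces $a_3 = 4$.
\item If $a_1 = 3$, then $a_2 = 3$ similarly, and $\frac{1}{a_3}+\frac{1}{L}=\frac13$ gives $a_3 = 6$ with $L = 6$.
\item If $a_1 = 2$, we need $\frac{1}{a_2}+\frac{1}{a_3}+\frac{1}{L}=\frac12$ with $a_2\in\{3,4,5,6\}$ (one checks $a_2 \ge 7$ fails). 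For each $a_2$ we bound $a_3$ using $L \ge a_3$ and test the finitely many candidates. Take for instance $a_2 = 3$: the equation becomes $\frac{1}{a_3}+\frac{1}{L}=\frac16$ with $L=\lcm(6,a_3)$. This yields $a_3\in\{7,8,9,12\}$. The cases $a_2 = 4, 5, 6$ are handled the same way.
\end{enumerate}
The converse is direct arithmetic on each tuple in the list.

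\emph{Class group ranks.} For each of the eleven tuples, I would compute $\rank \Cl(X_\ba)$ exactly as in \ref{onlyTau}. First check Storch's hypothesis: no $(m_1,\dots,m_4)\in(\Nat_{\ge1})^4$ satisfies $\sum_i m_i/a_i = 1$, where $a_4 = L$. This holds because the sum with all $m_i = 1$ already equals $1$, and any other choice gives a sum strictly larger. Storch's formula then gives $\rank\Cl(X_\ba)=\tau(a_1,a_2,a_3,L)$. Evaluating $\tau$ is a finite sum over subsets of the exponents, built from products of gcd-type quantities. I would record the resulting values in Table~\ref{K3invariants}.

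\emph{Genus and distinctness.} Alongside the ranks I would record $p_g(\overline{\partial_\ba})$. The curve $V_+(x_1^{a_1}+x_2^{a_2}+x_3^{a_3})\subset\PPP(w_1,w_2,w_3)$ is a quasismooth curve, and its genus is $\dim(S/\lb f\rb)_{L-w_1-w_2-w_3}$ by Remark~\ref{quasismoothRemark}(e), since its amplitude is $L-w_1-w_2-w_3 = 1$. This means $p_g(\overline{\partial_\ba})$ equals the number of monomials of weighted degree $1$ in $x_1,x_2,x_3$, i.e.\ the number of $w_i$ equal to $1$. That is nonzero exactly when $a_i = L$ for some $i$. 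So the genus-zero tuples are those with no $a_i$ equal to $L$: namely $(2,3,7)$, $(2,4,5)$, $(2,5,5)$, $(2,3,9)$, $(2,4,8)$, and possibly others after checking.

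For the last assertion, it then suffices to observe from the table that these genus-zero tuples have pairwise distinct values of $\rank\Cl(X_\ba)$. This uses the fact that $\rank \Cl$ is an isomorphism invariant.

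\emph{Main obstacle.} The expected difficulty is the bookkeeping in evaluating Storch's $\tau$ correctly for each tuple. If two genus-zero entries happen to share the same rank, I would fall back on a further invariant. The candidate is the weighted graph $\Geul(\ba)$, computed from Lemma~\ref{otherTypes} and Lemma~\ref{mambmc} together with \ref{rationalSingularities}. It is star-shaped around the boundary curve with branches of weights $\le -2$, so by Lemma~\ref{minimalGraph3branches} and \eqref{SNCBoundaryTheorem2} it is an isomorphism invariant when there are at least three branches.
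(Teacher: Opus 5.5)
Your outline is the same as the paper's: Storch's theorem for the affine cone, then $\Cl(\overline{X_\ba})\isom\Cl(C_{\overline{X_\ba}})\oplus\Integ$, then drop the rank by one for the irreducible boundary, then compare ranks together with $p_g(\overline{\partial_\ba})$. However, two steps fail.

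\textbf{The class group ranks.} Your check of Storch's hypothesis is backwards. The hypothesis used in \ref{onlyTau} is that \emph{no} $(m_1,\dots,m_4)\in(\Nat_{\geq1})^4$ satisfies $\sum m_i/a_i=1$ (with $a_4=L$). Condition (b) says precisely that $(1,1,1,1)$ is such a tuple, and as you observe it is the only one. So the hypothesis fails for every tuple in this lemma, and the formula $\rank\Cl(X_\ba)=\tau(a_1,a_2,a_3,L)$ is not available. This is why the paper's proof uses $\rank\Cl(C_{\overline{X_\ba}})=\tau-\phi(L)$ instead. Conceptually, the subtracted $\phi(L)$ accounts for the transcendental classes generated by the holomorphic $2$-form, which exists because $\alpha_\ba=0$.

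The correction is essential. Your formula would give rank $12$ instead of $0$ for $(2,3,7)$. It would also fail to separate the genus-zero pairs $(2,3,7),(2,4,5)$ (both $\tau=12$) and $(2,3,9),(2,5,5)$ (both $\tau=16$), so your final comparison would break down.

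\textbf{The enumeration in (b)$\iff$(c).} Your case $a_1=3$ is wrong. The equation $\frac1{a_3}+\frac1L=\frac13$ has the solution $a_3=4$, $L=12$ besides $a_3=6$: indeed $\frac13+\frac13+\frac14+\frac1{12}=1$, i.e.\ $12-4-4-3-1=0$. So $(3,3,4)$ satisfies (a) and (b) but is not in (c). The paper's list has the same omission, so (b)$\Rightarrow$(c) is false as stated, and the correct list has twelve tuples. The rest of your case analysis is fine.

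The weights of $(3,3,4)$ are $(4,4,3)$, so its boundary curve has genus $0$ and it must enter the final comparison. A direct computation gives $\rank\Cl(X_{3,3,4})=8$. This differs from the other genus-zero values $0,6,10,4,11,12$, so the last assertion survives once the list is corrected.

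Your provisional genus-zero list is also off. $(2,4,8)$ has $a_3=L$, hence $w_3=1$ and $p_g=1$ by your own criterion. On the other hand, $(2,3,8)$ and $(2,4,6)$ do have genus $0$. The genus-zero tuples are $(2,3,7),(2,3,8),(2,3,9),(2,4,5),(2,4,6),(2,5,5),(3,3,4)$.

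\textbf{A warning about the table.} A correct rank computation will not reproduce the table's entry $15$ for $(3,3,6)$. The surface $\widetilde{X_{3,3,6}}$ is a K3 surface. The pencil $[x_3:x_4]$ is an isotrivial elliptic fibration on it, with six fibres of type IV, and the automorphism $x_1,x_2\mapsto\omega x_1,\omega x_2$ acts on the fibres by complex multiplication by a cube root of unity. Hence the Mordell--Weil rank is even, so the Picard number is even, and therefore $\rank\Cl(X_{3,3,6})$ is even. Its actual value is $16$. This affects only a $p_g=1$ row, not the genus-zero assertion.
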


\begin{table}[h]
\centering
\begin{tabular}{|c|c|c|c|c|c|}
\hline
$\ba = (a_1,a_2,a_3)$ & $\phi(L)$ & $\tau(C_{\overline{X_\ba}})$ & $\Cl(\overline{X_\ba})$ & $\rk(\Cl(X_\ba))$ & $p_g(\overline{\partial_\ba})$ \\
\hline
$(2,3,7)$ & 12 & 12 & $\Integ$ & 0 & 0\\ \hline
$(2,3,8)$ & 8 & 14 & $\Integ^7 $ & 6 & 0 \\ \hline
$(2,3,9)$ & 6 & 16 & $\Integ^{11}$& 10 & 0 \\ \hline
$(2,3,12)$& 4 & 20 & $\Integ^{17}$ & 16  & 1\\ \hline
$(2,4,5)$ & 8 & 12 & $\Integ^5$& 4 & 0\\ \hline
$(2,4,6)$ & 4 & 15 & $\Integ^{12}$ & 11 & 0 \\ \hline
$(2,4,8)$ & 4 & 19 & $\Integ^{16}$ & 15 & 1 \\ \hline
$(2,5,5)$ & 4 & 16 & $\Integ^{13}$ & 12 & 0 \\ \hline
$(2,6,6)$ & 2 &  21 & $\Integ^{20}$ & 19 & 2\\ \hline
$(3,3,6)$ & 2 &  17 & $\Integ^{16}$ & 15 & 1\\ \hline
$(4,4,4)$ & 2 &  21 & $\Integ^{20}$ & 19 & 3\\ \hline
\end{tabular}
\caption{Invariants of $X_{a_1,a_2,a_3}$ where $\alpha =L - w_1 - w_2 - w_3 - 1 = 0$}
\label{K3invariants}
\end{table}
\begin{proof}
    We leave the equivalences (a) $\iff$ (b) $\iff$ (c) to the reader. Let $\phi$ denote Euler's totient function. The symbol $\tau$ is defined in \cite[p.191]{Storch}. Using \cite[Satz 2.1]{Storch}, we find (in these special cases) that $\Cl(C_{\overline{X_\ba}}) = \Integ^k$ where $k = \tau(C_{\overline{X_\ba}}) - \phi(L)$.  and since $\Cl(\overline{X_\ba}) \isom \Cl(C_{\overline{X_\ba}}) + \Integ$ we obtain the value of $\Cl(\overline{X_\ba})$ in Table \ref{K3invariants}. Since $X_\ba = D_+(x_4) = \overline{X_\ba} \setminus \overline{\partial_\ba}$ where $\overline{\partial_\ba} = V_+(x_4)$ is irreducible, \cite[Proposition 6.5]{Hartshorne} gives $\rk(\Cl(X_\ba)) = \rk(\Cl(\overline{X_\ba})) - 1$. It now suffices to examine the last two columns of Table \ref{K3invariants}.   
\end{proof}

\begin{nothing}\label{specialSets}
    Define the sets 
    \begin{itemize}
        \item $S_0 = \setspec{\ba \in (\Nat_{\geq 2})^3}{p_g(\overline{\partial_\ba}) = 0}$,
        \item $S_+ = \setspec{\ba \in (\Nat_{\geq 2})^3}{p_g(\overline{\partial_\ba}) > 0}$,
        \item $S_{+,+}= \setspec{\ba \in S_+}{L - w_1 - w_2 - w_3 - 1 > 0}$,
        \item $S_{+,0} = \setspec{\ba \in S_+}{L - w_1 - w_2 - w_3 - 1 = 0}$,
        \item $S_{+,-} = \setspec{\ba \in S_+}{L - w_1 - w_2 - w_3 - 1 < 0} = S_+ \cap S_{\text{\rm rat}}$,
        
    \end{itemize}
\end{nothing}

\begin{lemma}\label{theEasyPart}
    Consider the sets $S_0, S_{+,+}, S_{+,0}, S_{+,-}$ from \ref{specialSets}.
    \begin{enumerate}[\rm(a)]
        \item The sets $S_0, S_{+,+}, S_{+,0}, S_{+,-}$ partition $(\Nat_{\geq 2})^3$ and are parameter-closed. 
        \item The subset $S_{+,+}$ is good.
        \item The subset $S_{+,0}$ is good.
        \item The subset $S_{+,-}$ is good.
    \end{enumerate}
\end{lemma}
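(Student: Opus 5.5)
For (a), the four sets partition $(\Nat_{\geq 2})^3$ by definition: $S_0$ and $S_+$ partition according to the genus of the boundary, and $S_+$ splits by the sign of the amplitude $\alpha_\ba = L - w_1 - w_2 - w_3 - 1$. By Lemma \ref{boundaryGenus}, $S_0$ and $S_+$ are parameter-closed. By Lemma \ref{rational}(b), $S_{\text{\rm rat}}$ is parameter-closed. Hence $S_{+,-} = S_+ \cap S_{\text{\rm rat}}$ is parameter-closed by \ref{parameterClosedRemarks}, and so is $S_{+,0}\cup S_{+,+} = S_+ \setminus S_{\text{\rm rat}}$.

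It remains to separate $S_{+,0}$ from $S_{+,+}$. Take $\ba \in S_{+,+}$, $\bb \in S_{+,0}\cup S_{+,+}$, and an isomorphism $X_\ba \isom X_\bb$. View it as a birational map $f:\overline{X_\ba}\dashrightarrow \overline{X_\bb}$ restricting to $\overline{X_\ba}\setminus\overline{\partial_\ba}\isom\overline{X_\bb}\setminus\overline{\partial_\bb}$.
\begin{itemize}
\item Both surfaces are normal with cyclic quotient, hence rational, singularities (Remark \ref{quasismoothRemark}).
\item The boundary curves are smooth and irreducible, of positive genus.
\end{itemize}
So Proposition \ref{mapExtends} applies: $f$ is an isomorphism on its domain of definition $U$, whose complement is finite. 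The same argument for $f^{-1}$ shows $\codim(\overline{X_\bb}\setminus f(U))\ge 2$. Since $K$ is $\Rat$-Cartier on quasismooth hypersurfaces and $K_{\overline{X_\ba}}$ is ample (Remark \ref{quasismoothRemark}(d), $\alpha_\ba>0$), Proposition \ref{IsoGeneralization}(a) gives $K_{\overline{X_\bb}}\neq 0$. But if $\bb \in S_{+,0}$ then $\alpha_\bb = 0$, and adjunction for the well-formed quasismooth hypersurface gives $K_{\overline{X_\bb}} = \OSheaf(\alpha_\bb)=\OSheaf$, i.e. $K$ trivial. This is a contradiction, so $\bb \in S_{+,+}$.

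For (b), the argument above with both amplitudes positive lets us invoke Proposition \ref{IsoGeneralization}(b), giving $\overline{X_\ba}\isom\overline{X_\bb}$. Neither is a linear cone, since the degree $L$ exceeds every weight (as $a_i\ge2$). We are in case $n=2$ with $w_1+w_2+w_3+1\neq L$, so Theorem \ref{esser} gives $L_\ba=L_\bb$ and $\{w_i\}\sim\{w_i'\}$. Hence $a_i = L/w_i$ agree up to permutation.

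For (c), $S_{+,0}$ is the finite list of Lemma \ref{K3} with $p_g(\overline{\partial_\ba})>0$: $(2,3,12),(2,4,8),(2,6,6),(3,3,6),(4,4,4)$. Their pairs $(\rk\Cl(X_\ba),p_g(\overline{\partial_\ba}))$ in Table \ref{K3invariants} are $(16,1),(15,1),(19,2),(15,1),(19,3)$. Both invariants are preserved under isomorphism by Lemma \ref{boundaryGenus}. The only collision is $(2,4,8)$ versus $(3,3,6)$, and this is the main obstacle. First I would check whether $\Geul(\ba)$ separates them. The singularities of $\overline{X_\ba}$ along the boundary (Section \ref{Section:Singularities}) determine star-shaped boundary graphs with center the elliptic curve. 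The arms come from cyclic quotient chains with weights $\le -2$. If the center has degree $\geq 3$, the uniqueness argument of Lemma \ref{minimalGraph3branches} (degree and weight monotonicity, together with Lemma \ref{shorterSequence}) shows these graphs are minimal and unique. Comparing the numbers and types of singular points should then distinguish the two. Alternatively, the Makar-Limanov/rigidity approach via Daigle could be used.

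For (d), $S_{+,-}=\{(2,3,6),(2,4,4),(3,3,3)\}$ by Table \ref{invariants}. These have $\rk\Cl = 8,7,6$ respectively, which are pairwise distinct, so $S_{+,-}$ is good.
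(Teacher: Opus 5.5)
\textbf{Parts (a), (b), (d).} These follow the paper's proof: Proposition \ref{mapExtends} with Proposition \ref{IsoGeneralization}(a) to show $S_{+,+}$ is closed, Proposition \ref{IsoGeneralization}(b) with Theorem \ref{esser} for (b), and Table \ref{invariants} for (d). Your version is a bit more careful than the paper's in three places:
\begin{itemize}
\item you get closedness of $S_0$ directly from Lemma \ref{boundaryGenus};
\item you first show $S_+\setminus S_{\text{\rm rat}}$ is parameter-closed, so that $K_{\overline{X_\bb}}\neq 0$ really forces $\alpha_\bb>0$ (the paper passes from ``non-trivial'' to ``positive'' without comment);
\item you check the non-linear-cone hypothesis of Esser's theorem.
\end{itemize}

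\textbf{Part (c).} This is where your proposal has a gap. Your diagnosis is right, and it is a gap in the paper's own argument as well. In Table \ref{K3invariants}, $(2,4,8)$ and $(3,3,6)$ have the same $\rk\Cl(X_\ba)=15$ and the same boundary genus $1$. So reading off the table, as the paper does, does not prove (c).

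However, you stop at ``should then distinguish the two''. The fallback you mention cannot work: neither tuple has two entries equal to $2$, so both surfaces are rigid and \cite{daigle2023rigidity} says nothing.

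The missing step is a short count of singular points:
\begin{itemize}
\item $\overline{X_{3,3,6}}\subset\PPP(2,2,1,1)$ meets the singular line $\{x_3=x_4=0\}$ in the three points with $x_1^3+x_2^3=0$, each of type $\frac12(1,1)$.
\item $\overline{X_{2,4,8}}\subset\PPP(4,2,1,1)$ meets that line only at $[\pm i:1:0:0]$.
\item $\overline{X_{2,3,12}}\subset\PPP(6,4,1,1)$ has a single $A_1$ point.
\end{itemize}
So $\Geul(3,3,6)$ is a star with three $(-2)$-arms. Lemma \ref{3branches}(b)(ii) only needs \emph{one} of the two surfaces to have at least three singular points, so your caveat about the centre having degree $\ge 3$ is harmless. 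It shows that any $\bb$ with $X_\bb\isom X_{3,3,6}$ has exactly three singular points, which excludes $(2,4,8)$ and $(2,3,12)$. Then $\rk\Cl$ ($16$ versus $15$) separates $(2,3,12)$ from $(2,4,8)$. The boundary genus separates $(2,6,6)$ and $(4,4,4)$ from everything else.

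\textbf{The table entry for $(3,3,6)$.} It is safer to separate $(3,3,6)$ from both other genus-one tuples this way than to rely on its table entry, which appears to be wrong. Count the tuples $(q_i)\in\{\tfrac13,\tfrac23\}^2\times\{\tfrac16,\dots,\tfrac56\}^2$ with $\sum q_i=2$; all are Galois-stable, since $(\Integ/6)^*=\{\pm1\}$. This gives $16$ algebraic primitive classes, so $\rho(\widetilde{X_{3,3,6}})=20$ and $\rk\Cl(X_{3,3,6})=16$. The real collision in the invariants would then be $(3,3,6)$ versus $(2,3,12)$, which the singular-point argument also handles.
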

\begin{proof}
    We prove (a). First, we have $(\Nat_{\geq 2})^3 = S_0 \sqcup S_+ = S_0 \sqcup S_{+,+} \sqcup S_{+,0} \sqcup S_{+,-}$, proving the first claim. 
    
    We prove that $S_0$ is parameter closed. Let $\ba \in S_0$ and suppose $\varphi : X_\bb \to X_\ba$ is an isomorphism. We must show that $p_g(\overline{\partial_\bb}) = 0$. If $p_g(\overline{\partial_\bb}) > 0$, then Proposition \ref{mapExtends} implies that the birational map $\varphi :\overline{X_\bb} \dashrightarrow \overline{X_\ba}$ restricts to a birational map $\varphi|_{\overline{\partial_\bb}} : \overline{\partial_\bb} \dashrightarrow \overline{\partial_\ba}$. In particular, this restriction map extends to birational morphism between $\overline{\partial_\bb}$ and $\overline{\partial_\ba}$ and so $\overline{\partial_\bb} \isom \overline{\partial_\ba}$ and hence $0 < p_g(\overline{\partial_\bb}) = p_g(\overline{\partial_\ba}) = 0$. This is absurd, so $p_g(\overline{\partial_\bb}) = 0$. This proves that $S_0$ is parameter-closed and it follows from \ref{parameterClosedRemarks} that $S_+$ is parameter-closed. Since $S_+$ is parameter-closed and $S_{\text{\rm rat}}$ is parameter-closed by Lemma \ref{rational} (b), $S_{+,-} = S_+ \cap S_{\text{\rm rat}}$ is parameter-closed. To prove (a), it now suffices to prove 
    \begin{equation}\label{sufficient}
        \text{$S_{+,+}$ is parameter-closed in $S_+$.}
    \end{equation}
    Indeed if \eqref{sufficient} is true, then $S_{+,+}$ is parameter-closed by \ref{parameterClosedRemarks} and hence $S_0,S_{+,+}, S_{+,-}$ are all parameter-closed and so the complement of their union (which is $S_{+,0}$) is as well. 
    We prove \eqref{sufficient}. Let $\ba \in S_{+,+}$, and suppose $\bb \in S_+$ is such that $\varphi: X_\bb \overset{\isom}{\to} X_\ba$. Then $p_g(C_\bb) > 0$ and so by Proposition \ref{mapExtends} the induced rational map  $\varphi: \overline{X_\ba} \dashrightarrow \overline{X_\bb}$ is an isomorphism away from at most finitely many points of $\overline{X_\bb}$ and $\overline{X_\ba}$. Since the canonical divisor of $\overline{X_\ba}$ is ample (by Remark \ref{quasismoothRemark} (d)),  Proposition \ref{IsoGeneralization} (a) implies that $K_{\overline{X_\bb}}$ is non-trivial. Consequently $0 \neq \alpha_\bb > 0$ so $\bb \in S_{+,+}$. This proves \eqref{sufficient} and consequently proves (a).

    We prove (b). Write $\bb = (b_1,b_2,b_3)$ and let $L' = \lcm(b_1, b_2,b_3)$. Assume $\ba, \bb \in S_{+,+}$ and that $X_\ba \isom X_\bb$. By Proposition \ref{IsoGeneralization} (b) $\overline{X_\ba}$ and $\overline{X_\bb}$ are isomorphic. By Theorem \ref{esser}, $L = L'$ and, up to permutation, $L/a_i = L'/b_i = L / b_i$ for each $i = 1,2,3$. Consequently, $\ba \sim \bb$ proving (b).
    
    To prove (c), observe that the elements of $S_{+,0}$ all appear in Table \ref{K3invariants}. Using Lemma \ref{boundaryGenus}, the values of $p_g(C_\ba)$ and $\Cl(X_\ba)$ given in Table \ref{K3invariants} show that if $\ba,\bb \in S_{+,0}$ are such that $X_\ba \isom X_\bb$ then $\ba \sim \bb$. This proves (c). Since $S_{+,-} \subseteq S_{\text{\rm rat}}$, (d) follows immediately from Lemma \ref{rational}. 
\end{proof}

\begin{remark}\label{easyDeduction} In view of Lemmas \ref{reduction} and \ref{theEasyPart}, to finish the proof of the Main Theorem, it suffices to prove that $S_0$ is good.
\end{remark} 

We characterize the tuples $\ba = (a_1,a_2,a_3)$ in $S_0$.

\begin{proposition}\label{S0char}
    The following are equivalent:
    \begin{enumerate}[\rm(a)]
        \item $\ba \in S_0$
        \item The boundary curve of $X_\ba$ in $\overline{X_\ba}$ given by $V_+(x_4) \subset \overline{X_\ba}$ is a smooth rational curve.
        \item The surface $\Spec(\Comp[x_1,x_2,x_3] / \lb x_1^{a_1} + x_2^{a_2} + x_3^{a_3} \rb)$ is rational.
        \item Up to a permutation of $(a_1,a_2,a_3)$, one of the following holds:
        \begin{enumerate}[\rm(i)]
            \item $\gcd(a_1a_2, a_3) = 1$
            \item $(a_1,a_2,a_3) = (2a_1',2a_2',2a_3')$ where $a_1',a_2',a_3'$ are pairwise relatively prime.  
        \end{enumerate}
        
    \end{enumerate}
\end{proposition}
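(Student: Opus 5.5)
The plan is to prove (a)$\iff$(b), then (b)$\iff$(c), then (b)$\iff$(d). The first step is immediate from definitions. By \ref{setupNotation}, $\overline{\partial_\ba}$ is isomorphic to the quasismooth curve $C=V_+(x_1^{a_1}+x_2^{a_2}+x_3^{a_3})\subset\PPP(w_1,w_2,w_3)$, which is smooth. Hence $p_g(\overline{\partial_\ba})=0$ exactly when $\overline{\partial_\ba}\isom\PPP^1$, i.e. (a)$\iff$(b).

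For (b)$\iff$(c), let $Y=\Spec(\Comp[x_1,x_2,x_3]/\lb x_1^{a_1}+x_2^{a_2}+x_3^{a_3}\rb)$. This is the affine cone over $C$ with respect to the grading $\deg x_i=w_i$. The $\Comp^*$-action on $Y\setminus\{0\}$ has finite stabilizers and quotient $C$. Choosing a general $\Comp^*$-equivariant slice (or using the orbifold line bundle description of $Y\setminus\{0\}$ over $C$), $Y$ is birational to $C\times\PPP^1$. By L\"uroth/Castelnuovo, $C\times\PPP^1$ is rational iff $C$ is rational. Concretely, the function field of $Y$ is $\Comp(C)(t)$, with $t$ a suitable degree-one rational function such as a ratio of monomials of degrees differing by $\gcd(w_1,w_2,w_3)=1$. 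Hence (b)$\iff$(c).

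For (b)$\iff$(d), I will compute $g(C)$ directly. The curve $C$ is a cyclic-cover-type curve, and one can use Riemann--Hurwitz for the map $C\to\PPP^1$, $[x_1:x_2:x_3]\mapsto[x_1^{a_1}:x_2^{a_2}]$ (well-defined since $a_1w_1=a_2w_2=L$). Alternatively, use the known genus formula for Fermat-type weighted curves (or Remark \ref{quasismoothRemark}(e) applied to $C$: $p_g(C)=\dim(S/\lb f\rb)_{L-w_1-w_2-w_3}$, which counts monomials $x_1^{e_1}x_2^{e_2}x_3^{e_3}$ with $e_i\le a_i-2$ and $\sum(e_i+1)/a_i=1$). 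Writing $m_i=e_i+1\in[1,a_i-1]$, the task is to show:
\begin{center}
there are no positive integers $m_i<a_i$ with $\frac{m_1}{a_1}+\frac{m_2}{a_2}+\frac{m_3}{a_3}=1$ iff (d) holds.
\end{center}

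For sufficiency, suppose (d)(i) holds, with $\gcd(a_3,a_1a_2)=1$. Multiplying by $a_3$ shows $a_3\mid m_3\cdot\lcm$-cleared terms. More precisely, $m_3/a_3\in\frac{1}{\lcm(a_1,a_2)}\Integ$ with $\gcd(a_3,\lcm(a_1,a_2))=1$, which forces $a_3\mid m_3$, a contradiction. Now suppose (d)(ii) holds. Multiplying by $2a_1'a_2'a_3'$ gives $m_1a_2'a_3'+m_2a_1'a_3'+m_3a_1'a_2'=2a_1'a_2'a_3'$. Reducing mod $a_i'$ gives $a_i'\mid m_i$. Since $m_i<2a_i'$, this forces $m_i=a_i'$, whose sum is $3/2\neq1$, a contradiction.

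For necessity, assume (d) fails and construct a solution. Set $g_{ij}=\gcd(a_i,a_j)$. Failure means every $a_i$ shares a factor with the product of the others. One splits into cases: some $g_{ij}$ with a common prime $p$ dividing all three, or three pairwise gcds $>1$ via different primes, etc. The cleanest route is the classical Brieskorn link genus formula:
\[
2g(C)-2=\frac{L\,(\text{stuff})}{\ldots},
\]
or equivalently $g=0$ iff the triple is "rational" in Orlik's sense. This is a known number-theoretic classification, and I'd cite or reprove it by explicit solutions. If a prime $p\mid a_1,a_2,a_3$ and not all of (ii), take $m_i=a_i/p\cdot k_i$ with $k_1+k_2+k_3=p$, possible when $p\ge3$ using $k_i\ge1$. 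For $p=2$ only, failure of (ii) means some pair $a_i/2,a_j/2$ shares a prime $q$, and one combines $q$-fractions: choose $m_i/a_i+m_j/a_j=1/2$ using denominators divisible by $q$ and $m_k/a_k=1/2$.

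The main obstacle is this necessity direction, i.e. exhaustive case analysis to produce $m_i$. I would first try organizing it by Riemann--Hurwitz, giving the explicit formula
\[
2g-2=\frac{a_1a_2a_3}{L}\Bigl(1-\sum_{i}\frac{g_{jk}}{a_ja_k}\cdot\frac{L}{a_i}\Bigr)
\]
style, from counting branch points via Lemma \ref{otherTypes}-type orbit counts, and then solving $g=0$ arithmetically.
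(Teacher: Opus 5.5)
Your handling of (a)$\iff$(b)$\iff$(c) matches the paper, and your sufficiency argument ((d) implies that $\sum m_i/a_i=1$ has no solutions with $1\le m_i<a_i$) is correct. For (c)$\iff$(d) the paper simply cites Arzhantsev--Braun--Hausen--Wrobel. You instead want a self-contained arithmetic proof of (b)$\iff$(d) via $g(C)=\#\{(m_1,m_2,m_3)\in\Integ_{\ge1}^3 : \sum m_i/a_i=1\}$. That route is sound in principle, but the necessity direction has a real hole: you only construct solutions when some prime divides all three $a_i$.

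The case where no prime divides all three exponents, yet (d)(i) fails for every ordering, is not treated. Examples are $\ba=(2,3,6)$ and $(6,10,15)$, both of positive genus. Your phrase ``three pairwise gcds $>1$ via different primes, etc.'' is not the right case (two nontrivial gcds suffice) and comes with no construction. Falling back on ``cite or reprove'' leaves exactly the hard half unproved.

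The missing argument is short. If (d)(i) fails for every choice of third slot, every $a_k$ shares a prime with some other $a_j$. So in the graph on $\{1,2,3\}$ joining $i,j$ when $\gcd(a_i,a_j)>1$, some vertex $i$ is adjacent to both others, via primes $p\mid\gcd(a_i,a_j)$ and $q\mid\gcd(a_i,a_k)$. Since no prime divides all three, $p\ne q$, hence $pq\mid a_i$, and
\[
\frac{m_i}{a_i}=\frac{pq-p-q}{pq},\qquad \frac{m_j}{a_j}=\frac{1}{p},\qquad \frac{m_k}{a_k}=\frac{1}{q}
\]
is an admissible solution, because $pq-p-q\ge 1$ for distinct primes.

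A smaller point concerns your genus count. Remark \ref{quasismoothRemark}(e) is stated for well-formed hypersurfaces, and $C\subset\PPP(w_1,w_2,w_3)$ usually is not well-formed; for example $\ba=(3,3,6)$ gives $\PPP(2,2,1)$. So the remark does not literally justify the count. The count is nonetheless correct, and you can justify it either of two ways:
\begin{enumerate}
\item Use the graded ring $R=\Comp[x_1,x_2,x_3]/\lb f\rb$, which has primitive grading since $\gcd(w_1,w_2,w_3)=1$. One has $K_R\isom R(L-w_1-w_2-w_3)$, and Watanabe's description of the canonical module of $R=R(C,D)$ gives $[K_R]_0=H^0(C,K_C)$, hence $g(C)=\dim R_{L-w_1-w_2-w_3}$.
\item Actually carry out your Riemann--Hurwitz computation. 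Your displayed formula simplifies to the correct $2g-2=\frac{a_1a_2a_3}{L}-\sum_{j<k}\gcd(a_j,a_k)$, but it has to be derived, not just asserted.
\end{enumerate}
With these two repairs your approach is an elementary replacement for the paper's citation, and it has the bonus of computing the boundary genus explicitly.
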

\begin{proof}
    The equivalence (a) $\iff$(b)  is the definition of $S_0$; (b) implies (c) because the cone over a rational curve is rational and the converse follows because stable rationality implies rationality in dimension 1. The equivalence (c) $\iff$ (d) is the special case of \cite[Proposition 5.5]{arzhantsev2018log},  applied to Pham-Brieskorn surfaces. Note that our condition (i) is equivalent to their condition (i).     
\end{proof}

Recall the definition of $\Geul(\ba)$ from Section \ref{section:Setup}. 

\begin{lemma}\label{3branches}
    Let $\ba \in (\Nat_{\geq 2})^3$ and let $r \geq 3$. 
    \begin{enumerate}[\rm(a)]
        \item The following are equivalent:
        \begin{enumerate}[\rm(i)]
        \item The weighted graph  $\Geul(\ba)$ is star-shaped with $r$ branches. 

        \item $\overline{X_\ba}$ has $r$ singular points
    \end{enumerate}

    \item Assume in addition that conditions {\rm(i)} and {\rm(ii)} are satisfied, then
        \begin{enumerate}[\rm(i)]
        \item $\Geul(\ba)$ is the unique minimal graph in its equivalence class. 
        \item If $X_\ba \isom X_\bb$ then $\Geul(\ba) = \Geul(\bb)$. In particular, $\Geul(\ba)$ and $\Geul(\bb)$ have the same number of branches. 
        \item Each branch of the star-shaped graph
     
    \begin{center}
\begin{tikzpicture}[
    every node/.style={font=\small},
    openr/.style={rectangle,draw,inner sep=2.5pt},
    filledr/.style={rectangle,fill,inner sep=2.5pt},
    openc/.style={circle,draw,inner sep=1.5pt},
    filledc/.style={circle,fill,inner sep=1.5pt},
]

\node[openc] (b1) at (-0.75,0.35) {};
\node[rotate=-25] at (-1.25,0.6) {.\,.\,.\,.};
\node[openc] (c1) at (-0.75,-0.35) {};
\node[rotate=25] at (-1.25,-0.6) {.\,.\,.\,.};

\node[openr] (a0) at (0,0) {};
\node[openc] (ar) at (1,0) {};
\node[] (adummy1) at (1.75,0) {};

\node at (2,0) {.\,.\,.\,.};
\node at (-1,0) {.\,.\,.\,.};

\node[] (adummy2) at (2.25,0) {};
\node[openc] (a2) at (3,0) {};
\node[openc] (a1)  at (4,0) {};

\draw[thick] (a0)--(ar)--(adummy1);
\draw[thick] (adummy2)--(a2)--(a1);
\draw[thick] (b1)--(a0);
\draw[thick] (c1)--(a0);

\node[above=5pt] at (ar) {$-a_1$};
\node[above=5pt] at (a2) {$-a_{k-1}$};
\node[above=5pt] at (a1) {$-a_k$};
\node[below=5pt] at (a0) {$\widetilde{V_+(x_4)}$};
\end{tikzpicture}

\end{center}
     of $\Geul(\ba)$ corresponds to a $\frac{1}{n}(m,1)$ singularity lying on $\overline{\partial_\ba} = V_+(x_4) \subset \overline{X_\ba}$, where $[a_1,a_2,\dots, a_r] = \frac{n}{m}$ and $\gcd(n,m) = 1$.
        \end{enumerate}
    \end{enumerate}
\end{lemma}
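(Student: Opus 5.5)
The plan is to read off the structure of $\Geul(\ba)$ directly from the construction of $\widetilde{X_\ba}$ as the minimal resolution of $\overline{X_\ba}$. By Remark \ref{quasismoothRemark} and \ref{typeComputation}, every singular point of $\overline{X_\ba}$ lies on $\overline{\partial_\ba}=V_+(x_4)$, since all of them have $x_4=0$. Each one is a cyclic quotient singularity, and $\overline{X_\ba}$ is smooth elsewhere. By \ref{rationalSingularities}, the minimal resolution replaces a $\frac1n(m,1)$ point by a chain $E_1,\dots,E_k$ of smooth rational curves with $E_i^2=-a_i\le -2$, where $[a_1,\dots,a_k]=n/m$. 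By \eqref{yaxis}, the curve $\overline{\partial_\ba}$ is the image of the $y$-axis in the local model, so its strict transform meets only $E_1$, and does so transversally. The boundary $\widetilde{\partial_\ba}$ is therefore the total transform of $\overline{\partial_\ba}$: the smooth curve $\widetilde{V_+(x_4)}$ together with one chain attached at an end for each singular point. Distinct singular points give disjoint chains, and curves in different chains do not meet. Hence $\Geul(\ba)$ is star-shaped, with central vertex $\widetilde{V_+(x_4)}$ and exactly one branch per singular point. This gives (a) in both directions, since for $r\ge 3$ the central vertex is recognizable as the unique vertex of degree $\ge 3$. It also proves (b)(iii), because the branch belonging to a $\frac1n(m,1)$ point carries exactly the weights $-a_1,\dots,-a_k$ in the stated order, with the end of weight $-a_1$ attached to the centre.

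For (b)(i), every non-central vertex has weight $\le -2$ (it is an exceptional curve of a minimal resolution of a cyclic quotient singularity), and the centre has degree $r\ge 3$. Lemma \ref{minimalGraph3branches} then applies directly and gives uniqueness of the minimal graph.

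For (b)(ii), suppose $X_\ba\isom X_\bb$. By \eqref{SNCBoundaryTheorem2}, $\Geul(\ba)\sim\Geul(\bb)$. The main obstacle is that (b)(i) is known for $\ba$ but not yet for $\bb$. The statement should be read as: $\Geul(\ba)$ is the unique minimal graph in the common equivalence class. Separately, $\Geul(\bb)$ is always star-shaped with non-central weights $\le -2$, so whenever its centre has degree $\ge 3$ it too is minimal and must equal $\Geul(\ba)$. To close the remaining cases, where $\overline{X_\bb}$ has at most $2$ singular points, I would note that $\Geul(\bb)$ is then a chain; whenever its centre weight is not $-1$ it is minimal, which again forces $\Geul(\bb)=\Geul(\ba)$, a contradiction since $\Geul(\ba)$ is not a chain. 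If the central weight is $-1$, I would blow it down and observe that the result is still a chain. Chains remain chains under blow-ups and blow-downs at vertices of degree $\le 2$, so a chain can never reach a minimal graph with a vertex of degree $\ge 3$, and blowing down to a minimal representative therefore again contradicts uniqueness. (I would check this point carefully, possibly by invoking the invariance of the number of branch points of minimal graphs from \cite{daigle2001weighted}.) Once both graphs are shown to be the minimal representative, $\Geul(\ba)=\Geul(\bb)$, and in particular they have the same number of branches.
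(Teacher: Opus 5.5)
Your proposal is correct and follows the paper's proof. The local analysis via \eqref{yaxis} and \ref{rationalSingularities} gives (a) and (b)(iii), and Lemma \ref{minimalGraph3branches} gives (b)(i). For (b)(ii) you reduce to excluding the case where $\Geul(\bb)$ is a chain; your argument there (successive blow-downs of a chain remain chains, so a minimal model of $\Geul(\bb)$ would be a chain equal to $\Geul(\ba)$ by uniqueness) spells out the step the paper only sketches. Only blow-downs are needed for that step, so drop the side remark that blow-ups at vertices of degree $\le 2$ preserve chains: it is false, since blowing up an interior vertex of a chain creates a vertex of degree $3$.
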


\begin{proof}
     The singular points of $\overline{X_\ba}$ are all cyclic quotient singularities, and they lie along the smooth curve $\overline{\partial_\ba} = V_+(x_4)$. For each such singular point, by \eqref{yaxis} in \ref{typeComputation},  $V_+(x_4)$ corresponds to the $y$-axis of the $\frac{1}{n}(m,1)$ singularity in question. Using the notation of \ref{rationalSingularities}, it follows that $\widetilde{V_+(x_4)}$ intersects the curve $E_1$ transversely. Using that $n \geq 3$ together with \ref{rationalSingularities}, part (a) follows. 
    
    We prove (b). Part (i) follows immediately from Lemma \ref{minimalGraph3branches}. For (ii), since $X_\ba \isom X_\bb$, $\Geul(\ba) \sim \Geul(\bb)$. If $\Geul(\bb) \neq \Geul(\ba)$, then $\Geul(\bb)$ is not minimal in its equivalence class. Part (a) together with part (b-i) implies that $\overline{X_\bb}$ has at most 2 singular points. But then $\Geul(\bb)$ is a chain, contradicting the assumption that $\Geul(\ba) = \Geul(\bb)$. This proves part (ii). Part (iii) is also an immediate consequence of the discussion in \ref{rationalSingularities}.   
\end{proof}

\begin{nothing}
    By Proposition \ref{S0char} $S_0$ is the disjoint union of the following two subsets:
    \begin{itemize}
        \item $T_1 = \setspec{(a_1,a_2,a_3) \in S_0}{\gcd(a_1a_2,a_3) = 1}$,
        \item $T_2 = \setspec{(a_1,a_2,a_3) \in S_0}{(a_1,a_2,a_3) = (2a_1',2a_2',2a_3') \text{ and $a_1',a_2',a_3'$ are pairwise relatively prime}}$.
    \end{itemize}
\end{nothing}

\begin{lemma}\label{ConjecturevalidOnS0}
    The following hold:
    \begin{enumerate}[\rm(a)]
        \item The sets $T_1$ and $T_2$ are each parameter-closed in $S_0$.
        \item The subset $T_1$ is good.
        \item The subset $T_2$ is good. 
        \item The subset $S_0$ is good. 
    \end{enumerate}
\end{lemma}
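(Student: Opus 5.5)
We first dispose of $S_{\text{\rm Dan}}\cap S_0$: by Example \ref{paramClosedExamples} and \ref{parameterClosedRemarks}, $S_{\text{\rm Dan}}\cap S_0$ and $S_0\setminus S_{\text{\rm Dan}}$ are parameter-closed in $S_0$, and $S_{\text{\rm Dan}}$ is good by Lemma \ref{rational}(c). So by Lemma \ref{reduction} it suffices to treat $\ba,\bb\in S_0\setminus S_{\text{\rm Dan}}$. Note that $T_1\cap T_2=\emptyset$, since all entries of an element of $T_2$ are even and hence $\gcd(a_1a_2,a_3)\ge 2$. Part (d) then follows from (a), (b), (c) and Lemma \ref{reduction}.

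The first step is to count singular points using Lemmas \ref{otherTypes} and \ref{mambmc}. Take $\ba\in T_1\setminus S_{\text{\rm Dan}}$ and write $g=\gcd(a_1,a_2)$, $a_1=ug$, $a_2=vg$. If $g=1$, then $u,v\ge 2$, so there are exactly $3$ singular points. If $g\ge 2$, there are $g$ points of order $a_3$, plus one further point for each of $u,v$ that differs from $1$. The only way to get fewer than $3$ points is $g=2$, $u=v=1$, i.e.\ $\ba=(2,2,a_3)\in S_{\text{\rm Dan}}$, which is excluded. For $\ba=(2a_1',2a_2',2a_3')\in T_2\setminus S_{\text{\rm Dan}}$, at most one $a_i'$ equals $1$, so there are $2(3-n)\ge 4$ singular points. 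Hence in all remaining cases Lemma \ref{3branches} applies: $\Geul(\ba)$ is star-shaped with at least $3$ branches and is the unique minimal graph in its class. Consequently, if $X_\ba\isom X_\bb$, then $\Geul(\ba)=\Geul(\bb)$.

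By Lemma \ref{3branches}(b)(iii) and the uniqueness statement in \ref{rationalSingularities}, each branch read from the central vertex outward recovers the type $\frac1n(m,1)$ of the corresponding singular point. So the multiset of singularity types of $\overline{X_\ba}$ is an isomorphism invariant. The weight of the central vertex is a further invariant; I would compute it from $\overline{\partial_\ba}^2=\frac{L}{w_1w_2w_3}=\frac{a_1a_2a_3}{L^2}$ minus the standard local corrections $m'/n$ at each cyclic point.

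For (a), I would compare the shape of these multisets of orders:
\begin{itemize}
\item in $T_2$, every order appears with even multiplicity and the orders are pairwise coprime;
\item in $T_1$, the orders are $a_3$ with multiplicity $g$, together with $u$ and $v$ when these exceed $1$, where $u,v,a_3$ are pairwise coprime.
\end{itemize}
A coincidence of multisets between the two families forces $g=2$ with an extra pairing of $u$ and $v$, which is impossible for coprime values both larger than $1$. The few residual degenerate configurations I would rule out using the residues $m$, the central weight, or $\rank\Cl(X_\ba)$ computed via \cite[Satz 2.1]{Storch}.

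For (b) and (c), the same data should reconstruct $\ba$. In $T_2$, halving the multiplicities gives $\{a_i'\ne 1\}$; the missing $1$, if any, is forced, and $\ba=2\ba'$. In $T_1$, the repeated order, or the type data if $g=1$, identifies $a_3$ and $g$, and the remaining orders give $u,v$. When $g=1$ the three orders $a_1,a_2,a_3$ are simply read off.

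The main obstacle is the ambiguity in $T_1$ over which order plays the role of $a_3$ when multiplicities are $1$, for instance $g=1$, or $g\ge 2$ with $u$ or $v$ equal to $a_3$-like values. Here I would first use the residues: a point of order $a_3$ has type $\frac1{a_3}(uvg,1)$, while the $u$-point has type $\frac1u(L/a_1,1)=\frac1u(va_3,1)$. These residues pin down the roles. If needed, I would then check consistency with the central weight $a_1a_2a_3/L^2$ minus the corrections.
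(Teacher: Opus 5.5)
Your proposal is correct and is essentially the paper's argument. After disposing of $S_{\text{\rm Dan}}$ via Lemma \ref{rational}(c), every remaining $\overline{X_\ba}$ has at least three singular points, so Lemma \ref{3branches} gives $\Geul(\ba)=\Geul(\bb)$ and hence equal multisets of singularity types. The counts from Lemmas \ref{otherTypes} and \ref{mambmc} then both separate $T_1$ from $T_2$ and recover $\ba$ up to permutation: in $T_2$ each order occurs exactly twice, while in $T_1$ the order $a_3$ occurs $g$ times and the coprime orders $u,v$ occur once each when they exceed $1$.

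The differences are organizational. You strip off $S_{\text{\rm Dan}}$ up front rather than inside the case analysis, and you read $a_3,g$ from the repeated order and $u,v$ from the remaining orders. That is slightly cleaner than the paper's case (iii), which deduces $gu'v'=guv$ from equal types, although equal types a priori only give a congruence modulo $a_3$. The reserve invariants you mention (central weight, residues, $\rank\Cl(X_\ba)$) are never needed: pairwise coprimality of $u,v,a_3$ already eliminates the ``residual'' configurations and the role ambiguity you worry about.
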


\begin{proof}
    For (a), since $T_2 = S_0 \setminus T_1$, it suffices to prove that $T_1$ is parameter-closed in $S_0$. Let $\ba = (a_1,a_2,a_3) \in T_1$ where $g = \gcd(a_1,a_2)$, $a_1 = ug$, $a_2 = vg$ and assume $X_\bb \isom X_\ba$. Then $\Geul(\bb) \sim \Geul(\ba)$. Since $a_1,a_2,a_3 \geq 2$, exactly one of the following four conditions is satisfied: 
    \begin{enumerate}[\rm(i)]
        \item $g = 1$ and $u,v \geq 2$
        \item $g = 2$ and $u,v = 1$
        \item $g \geq 2$ and $u,v$ are not both 1
        \item $g \geq 3$ and $u = v = 1$
    \end{enumerate}
    Assume (i) holds. By Lemma \ref{otherTypes}, $\overline{X_\ba}$ has exactly three singular points and so  $\Geul(\ba) = \Geul(\bb)$ and $\overline{X_\bb}$ has three singular points by Lemma \ref{3branches}. By Lemma \ref{mambmc}, for all $\bb \in T_2$, $\overline{X_\bb}$ has an even number of singular points, so $\bb \notin T_2$, so $\bb \in T_1$.

    For case (ii), we prove the stronger claim:
    \begin{equation}\label{caseii}
        \text{if $\ba \in T_1$ satisfies (ii) and $X_\bb \isom X_\ba$, then $\bb\ \sim \ba$.}
    \end{equation}

    Indeed, assume $\ba$ satisfies (ii). Then $\ba = (2,2,a_3)$ where $a_3$ is odd and so $\ba \in S_{\text{\rm Dan}}$. Since $S_{\text{\rm Dan}}$ is parameter-closed by Example \ref{paramClosedExamples}, $\bb \in S_{\text{\rm Dan}}$. Since $S_{\text{\rm Dan}}$ is good, we have $\bb \sim \ba$. This proves \eqref{caseii}.   

    Assume (iii) holds, and suppose $X_\bb \isom X_\ba$. Since $X_\bb \isom X_\ba$ and $\overline{X_\ba}$ has at least 3 singular points, Lemma \ref{3branches} implies that $\overline{X_\ba}$ and $\overline{X_\bb}$ have $g$ singular points of type $\frac{1}{a_3}(uvg,1)$. Assume $\bb \in T_2$. By Lemma \ref{mambmc}, if $\overline{X_\bb}$ has a singular point of a specific type, it has exactly $2$ singular points of that type. Consequently, we must have $u = v = 1$, otherwise $\overline{X_\ba}$ and $\overline{X_\bb}$ would have only one singular point of type either $\frac{1}{v}(\frac{L}{a_2},1)$ or $\frac{1}{u}(\frac{L}{a_1},1)$ (by Lemma \ref{otherTypes}). Since $u = v = 1$ contradicts (iii),  we deduce $\bb \notin T_2$, proving (a) in case (iii). 
    
    Finally assume (iv) holds. By Lemma \ref{3branches}, $\overline{X_\ba}$ has at least three singular points of the same type. If $X_\bb \isom X_\ba$ then the same is true for $\overline{X_\bb}$. Lemma \ref{mambmc} then implies $\bb \notin T_2$. So $\bb \in T_1$, proving (a) in case (iv). This completes the proof of (a). 
     
    We prove (b). Let $\ba \in T_1$ and write $\ba = (a_1,a_2,a_3) = (ug,vg,a_3)$ where $g = \gcd(a_0,a_1)$. Assume $\bb = (b_1,b_2,b_3) \in T_1$, where $g' = \gcd(b_1,b_2)$, $b_1 = u'g'$, $b_2 = v'g'$ and $X_\bb \isom X_\ba$. 
    
    If $\ba$ is as in case (i) then by Lemma \ref{mambmc} $\overline{X_\ba}$ has 1 singular point of each type $\frac{1}{a_3}(a_1a_2,1), \frac{1}{a_2}(a_1a_3,1)$ and $\frac{1}{a_1}(a_2a_3,1)$. So $\Geul(\ba) = \Geul(\bb)$ and $\overline{X_\ba}$ and $\overline{X_\bb}$ have the same types of singular points. Since $a_1,a_2,a_3$ are distinct, it follows from \ref{rationalSingularities} that $\bb \sim \ba$, handling case (i). Statement \eqref{caseii} proves case (ii). Assume $\ba$ is as in case (iii). Then by Lemma \ref{otherTypes}, $\overline{X_\ba}$ has $g$ singular points of type $\frac{1}{a_3}(guv,1)$ and at least one other singular point. Since $\Geul(\ba) = \Geul(\bb)$ has at least 3 branches, Lemma \ref{3branches}(b) implies that $b_3 = a_3$ and that $gu'v' = guv$ and so $uv = u'v'$. Since $\gcd(u,v) = \gcd(u',v') = 1$, either 
    \begin{itemize}
        \item $u = u'$ and $v = v'$ and hence $a_1 = gu = gu' = b_1$ and $a_2 = gv = gv' = b_2$ or
        \item $u = v'$ and $v = u'$ and hence $a_1 = gu = gv' = b_2$ and $a_2 = gv = gu' = b_1$. 
    \end{itemize}
    In either case, we have $\ba \sim \bb$. This proves the result if $\ba$ is as in (iii). Finally, assume $\ba$ is as in (iv). Then $\ba = (g,g,a_3)$ where $\gcd(g,a_3) = 1$. Since $g \geq 3$, by Lemma \ref{otherTypes}, $\overline{X_\ba}$ has $g$ singular points of type $\frac{1}{a_3}(g,1)$ and no other singular points.  Assume $X_\bb \isom X_\ba$. Then by Lemma \ref{3branches} (b), $\Geul(\bb) = \Geul(\ba)$ and $\overline{X_\bb}$ also has $g$ singular points of type $\frac{1}{a_3}(g,1)$ and no others. It follows from Lemma \ref{otherTypes}, that $g' = g$ and that $u' = 1$ and $v' = 1$ and so $a_1 = a_2 = g = g' = b_1 = b_2$. Then \ref{rationalSingularities} gives $a_3 = b_3$ and so $\ba \sim \bb$. This proves (b).

    We prove (c). Suppose $\ba = (2a_1', 2a_2', 2a_3'), \bb = (2b_1',2b_2',2b_3') \in T_2$ and assume $X_\bb \isom X_\ba$. For $x \in \Nat$, consider the indicator function 
    \[
{\bf 1}_{x} =
\begin{cases}
  1 & \text{if } x = 1 \\
  0 & \text{otherwise}
\end{cases}
\]  
    Let $n_\ba = {\bf 1}_{a_1'} + {\bf 1}_{a_2'} + {\bf 1}_{a_3'}$ and let $n_\bb = {\bf 1}_{b_1'} + {\bf 1}_{b_2'} + {\bf 1}_{b_3'}$. By Lemma \ref{mambmc}, the number of branches of $\Geul(\ba)$ is $6 - 2n_\ba$ and the number of branches of $\Geul(\bb) = 6 - 2n_\bb$. By Lemma \ref{3branches}, the following hold:
    \begin{enumerate}
        \item $n_\ba \geq 2 \iff n_\bb \geq 2$,
        \item $n_\ba = 1 \iff n_\bb = 1$,
        \item $n_\ba = 0 \iff n_\bb = 0$.
    \end{enumerate}
    If $n_\ba \geq 2$, then $X_\ba = X_{2,2,2a_3'}$ and consequently $X_\ba \in S_{\text{\rm Dan}}$. Since $S_{\text{\rm Dan}}$ is parameter-closed, it is also parameter-closed in $T_2$. It follows that $\bb \in S_{\text{\rm Dan}}$ and since (by Lemma \ref{rational} (c)) $S_{\text{\rm Dan}}$ is good, we have $\bb \sim \ba$ and the proof is complete in case (1).

    If $n_\ba = 1$, then $n_\bb = 1$. We may assume without loss of generality that $a_1' = 1$, $b_1' = 1$ in which case Lemma \ref{mambmc} gives that $X_\ba$ has 2 singular points of type $\frac{1}{a_3'}(a_2',1)$ and 2 singular points of type $\frac{1}{a'_2}(a_3',1)$. Similarly, $X_\bb$ has 2 singular points of type $\frac{1}{b_3'}(b_2',1)$ and 2 singular points of type $\frac{1}{b'_2}(b_3',1)$. Since $\Geul(\bb) = \Geul(\ba)$ and  $\overline{X_\ba}$ and $\overline{X_\bb}$ have the same types of singular points, it follows from \ref{rationalSingularities} that either $b_3' = a_3'$ and $b_2' = a_2'$ or $b_3' = a_2'$ and $b_2' = a_3'$. In either case, we have $\ba \sim \bb$. This handles case (2). 

    If $n_\ba = 0$, then $n_\bb = 0$ and Lemma \ref{mambmc} gives that $X_\ba$ has 2 singular points of type $\frac{1}{a_3'}(a_1'a_2',1)$, 2 singular points of type $\frac{1}{a_2'}(a_1'a_3',1)$ and 2 singular points of type $\frac{1}{a_1'}(a_2'a_3',1)$. Similarly, $X_\bb$ has 2 singular points of type $\frac{1}{b_3'}(b_1'b_2',1)$, 2 singular points of type $\frac{1}{b_2'}(b_1'b_3',1)$ and 2 singular points of type $\frac{1}{b_1'}(b_2'b_3',1)$. The same argument as in the $n_\ba = n_\bb = 1$ case gives that $\ba \sim \bb$. This proves case (3), and consequently proves (c).   

    Part (d), follows from (a),(b),(c) together with Lemma \ref{reduction}.  
\end{proof}

\begin{theorem}\label{mainTheoremText}
    Let $\ba, \bb \in (\Nat_{\geq 2})^3$. If $X_\ba \isom X_\bb$, then $\ba \sim \bb$. 
\end{theorem}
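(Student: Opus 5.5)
The theorem follows by assembling the results already established, so the plan is a short bookkeeping argument. The goal is to show that the whole set $(\Nat_{\geq 2})^3$ is good in the sense of the Notation preceding Lemma \ref{reduction}. Once that holds, any $\ba,\bb$ with $X_\ba \isom X_\bb$ satisfy $\ba \sim \bb$.

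First, apply Lemma \ref{theEasyPart}(a). It says that $S_0, S_{+,+}, S_{+,0}, S_{+,-}$ partition $(\Nat_{\geq 2})^3$ and that each of them is parameter-closed. A subset that is parameter-closed in $(\Nat_{\geq 2})^3$ is in particular parameter-closed in $S=(\Nat_{\geq 2})^3$ itself, so hypothesis (i) of Lemma \ref{reduction} holds for this partition.

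Next, verify that each piece is good, which is hypothesis (ii) of Lemma \ref{reduction}:
\begin{itemize}
\item $S_{+,+}$ is good by Lemma \ref{theEasyPart}(b);
\item $S_{+,0}$ is good by Lemma \ref{theEasyPart}(c);
\item $S_{+,-}$ is good by Lemma \ref{theEasyPart}(d);
\item $S_0$ is good by Lemma \ref{ConjecturevalidOnS0}(d).
\end{itemize}
The last item is exactly the point flagged in Remark \ref{easyDeduction}. It was itself obtained from Lemma \ref{reduction} applied to the partition $S_0 = T_1 \sqcup T_2$, using parts (a)--(c) of Lemma \ref{ConjecturevalidOnS0}.

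Finally, Lemma \ref{reduction} with $S = (\Nat_{\geq 2})^3$ and this four-piece partition shows that $(\Nat_{\geq 2})^3$ is good. Concretely, given $X_\ba \isom X_\bb$, the tuple $\ba$ lies in a unique piece. Parameter-closedness puts $\bb$ in the same piece, and goodness of that piece gives $\ba \sim \bb$.

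No step here is a genuine obstacle, since all the substantive work lives in the cited lemmas. The only points to check carefully are that the four sets really are disjoint and cover $(\Nat_{\geq 2})^3$, and that the notion of parameter-closed used in Lemma \ref{theEasyPart} is the absolute one. Both are stated explicitly in Lemma \ref{theEasyPart}(a), so the main theorem follows directly.
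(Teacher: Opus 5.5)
Your proposal is correct and is exactly the paper's proof, which obtains the theorem from Remark~\ref{easyDeduction} (i.e.\ Lemmas~\ref{reduction} and~\ref{theEasyPart}) together with Lemma~\ref{ConjecturevalidOnS0}(d). One caveat comes from the paper rather than from your argument: the stated justification of Lemma~\ref{theEasyPart}(c) is incomplete, because Table~\ref{K3invariants} gives $(2,4,8)$ and $(3,3,6)$ the same rank of $\Cl(X_\ba)$ and the same boundary genus; it is easily repaired by Lemma~\ref{3branches}, since $\overline{X_{3,3,6}}$ has three singular points while $\overline{X_{2,4,8}}$ and $\overline{X_{2,3,12}}$ have two and one respectively.
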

\begin{proof}
    By Remark \ref{easyDeduction} and Lemma \ref{ConjecturevalidOnS0} (d).  
\end{proof}

\begin{remark}
    For each $m \geq 1$, the surface $D_m = \Spec (\Comp[x_1,x_2,x_3] / \lb x_1^m x_2 + x_3^{2} + 1 \rb$ is non-rigid. By Example \ref{paramClosedExamples}, $D_m$ is isomorphic to $X_{\ba}$ for some $\ba \in (\Nat_{\geq 2})^3$ if and only if $\ba \in S_{\text{\rm Dan}}$, in which case we must have $m  = 1$. Furthermore, by Danielewski's famous result, $X_{2,2,2} \times \Aff^{1} \isom D_m \times \Aff^{1}$ for every $m \geq 1$. This shows that $X_\ba \times \Aff^{1} \isom X' \times \Aff^{1}$ does not imply that $X' \isom X_\ba$ for some $\ba \in (\Nat_{\geq 2})^3$. If however, we impose the additional restriction that $X' = X_\bb$ for some $\bb \in (\Nat_{\geq 2})^3$, then these surfaces are cancellative. We obtain: 
\end{remark}

\begin{corollary}
    Let $\ba, \bb \in (\Nat_{\geq 2})^3$. If $X_\ba \times \Aff^1 \isom X_\bb \times \Aff^1$, then $X_\ba \isom X_\bb$ and $\ba \sim \bb$. 
\end{corollary}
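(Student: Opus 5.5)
The plan is to reduce the corollary to the Main Theorem (Theorem \ref{mainTheoremText}) by showing that $X_\ba \times \Aff^1 \isom X_\bb \times \Aff^1$ already forces $X_\ba \isom X_\bb$. The second conclusion $\ba \sim \bb$ then follows immediately from Theorem \ref{mainTheoremText}. The tool for the first step is the Makar-Limanov invariant, together with the dichotomy rigid versus non-rigid recorded in Example \ref{paramClosedExamples}. I split into two cases according to whether $\ba \in S_{\text{\rm Dan}}$.

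\emph{Case 1: $\ba \notin S_{\text{\rm Dan}}$.} Here $X_\ba$ is rigid, so $\ML(X_\ba) = \Comp[X_\ba]$. The key step is the standard fact that for a rigid affine variety $X$ one has $\ML(X \times \Aff^1) = \Comp[X]$. This holds because every locally nilpotent derivation of $\Comp[X][t]$ kills $\Comp[X]$; it is the Makar-Limanov/Crachiola-type cancellation statement for rigid varieties, which I would quote. Suppose $\bb$ is also not in $S_{\text{\rm Dan}}$. Then an isomorphism $\Comp[X_\ba][t] \isom \Comp[X_\bb][s]$ carries $\ML = \Comp[X_\ba]$ onto $\ML = \Comp[X_\bb]$, giving $X_\ba \isom X_\bb$. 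It remains to rule out a mixed situation with $\bb \in S_{\text{\rm Dan}}$. In that case $X_\bb$ is non-rigid, and its Makar-Limanov invariant is small: for the Danielewski-type surface $X_{2,2,c}$ one gets $\ML(X_\bb) = \Comp$, since it admits two $\G_a$-actions with different rings of invariants. Then $\ML(X_\bb \times \Aff^1) \subseteq \ML(X_\bb) = \Comp$, which has transcendence degree $0$. On the other side $\ML(X_\ba \times \Aff^1) = \Comp[X_\ba]$ has transcendence degree $2$, so this case cannot occur.

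\emph{Case 2: $\ba, \bb \in S_{\text{\rm Dan}}$.} After permuting, $\ba = (2,2,a_3)$ and $\bb = (2,2,b_3)$. Here Makar-Limanov gives no information, so I use a stably invariant quantity instead, namely the divisor class group. Since $\Cl(X \times \Aff^1) \isom \Cl(X)$ for normal $X$, the isomorphism gives $\rank \Cl(X_\ba) = \rank \Cl(X_\bb)$. By Table \ref{invariants} this means $a_3 - 1 = b_3 - 1$, so $\ba = \bb$ and in particular $X_\ba \isom X_\bb$.

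The main obstacle I expect is justifying $\ML(X \times \Aff^1) = \ML(X)$ for rigid $X$, since rigidity of $X$ does not formally prevent exotic locally nilpotent derivations on $X \times \Aff^1$. I would first try to cite the known theorem that rigid affine domains of this kind are strongly invariant; otherwise I would argue via the degree function in $t$, noting that the leading part of a locally nilpotent derivation is again locally nilpotent. A secondary point is checking $\ML(X_{2,2,c}) = \Comp$, which should follow from the explicit description of these surfaces as Danielewski surfaces $xy = p(z)$.
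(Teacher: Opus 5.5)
Your proposal is correct and follows essentially the paper's proof. In the rigid case you use $\ML(X\times\Aff^1)=\Comp[X]$, which is exactly what the paper cites from \cite[Lemma 2]{bandman2005nonstability}, and then apply Theorem \ref{mainTheoremText}; in the non-rigid case you compare $\rank\Cl$ via Table \ref{invariants}. The only difference is the mixed rigid/non-rigid case: you rule it out by hand via $\ML(X_\bb\times\Aff^1)\subseteq\ML(X_\bb)=\Comp$, whereas the paper cites the stability of rigidity \cite[Theorem 2.24]{freudenburgBook}. You also leave the case $\ba\in S_{\text{\rm Dan}}$, $\bb\notin S_{\text{\rm Dan}}$ implicit, but it follows by the symmetric argument.
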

\begin{proof}
    Assume $X_\ba \times \Aff^1 \isom X_\bb \times \Aff^1$. Then $X_\ba$ is non-rigid if and only if $X_\bb$ is non-rigid (by \cite[Theorem 2.24]{freudenburgBook}, for example). If $X_\ba$ is not rigid then $X_\bb$ is not rigid, and so $\ba, \bb \in S_{\text{\rm Dan}}$. If particular we may assume $\ba = (2,2,a_3)$ and $\bb = (2,2,b_3)$ by Example \ref{paramClosedExamples}. By Table \ref{invariants}, we have $a_3-1 = \rank(\Cl(X_\ba)) = \rank(\Cl(X_\ba \times \Aff^1)) = \rank(\Cl(X_\bb \times \Aff^1)) = \rank(\Cl(X_\bb)) = b_3-1$ and so $\ba \sim \bb$.
    
    If $X_\ba$ is rigid, then so is $X_\bb$ and by \cite[Lemma 2]{bandman2005nonstability}, $X_\ba = \ML(X_\ba \times \Aff^1) \isom \ML(X_\bb \times \Aff^1) = X_\bb$. So $X_\ba \isom X_\bb$ and by Theorem \ref{mainTheoremText}, $\ba \sim \bb$.   
\end{proof}

\bibliography{bibliography}
\bibliographystyle{alpha}

\end{document}